\theoremstyle{remark}\newtheorem{remark}{Remark}[section]
\theoremstyle{plain}
	\newtheorem{theorem}[remark]{Theorem}
\theoremstyle{definition}
	\newtheorem{definition}[remark]{Definition}
\newcommand{\abs}[1]{\left\vert#1\right\vert}
\newcommand{\cB}{\mathcal{B}}
\newcommand{\cE}{\mathcal{E}}
\newcommand{\cI}{\mathcal{I}}
\newcommand{\cM}{\mathcal{M}}
\newcommand{\cS}{\mathcal{S}}
\newcommand{\cT}{\mathcal{T}}
\newcommand{\cW}{\mathcal{W}}
\newcommand{\cV}{\mathcal{V}}
\newcommand{\dual}[2]{\langle#1,\,#2\rangle}
\newcommand{\norm}[1]{\left\Vert#1\right\Vert}
\newcommand{\normBL}[1]{\left\Vert#1\right\Vert^\ast_{BL}}
\newcommand{\R}{{\mathbb R}}
\newcommand{\supp}[1]{\operatorname{supp}#1}
\newcommand{\vmax}{v_\text{max}}
\begin{document}

\title{Transport of measures on networks}

\author{Fabio Camilli}
\address{Dipartimento di Scienze di Base e Applicate per l'Ingegneria, ``Sapienza'' Universit{\`a}  di Roma, Via Scarpa 16, 00161 Rome, Italy}
\email{camilli@sbai.uniroma1.it}

\author{Raul De Maio}
\address{Dipartimento di Scienze di Base e Applicate per l'Ingegneria, ``Sapienza'' Universit{\`a}  di Roma, Via Scarpa 16, 00161 Rome, Italy}
\email{raul.demaio@sbai.uniroma1.it}

\author{Andrea Tosin}
\address{Department of Mathematical Sciences ``G. L. Lagrange'', Politecnico di Torino, Corso Duca degli Abruzzi 24, 10129 Turin, Italy}
\email{andrea.tosin@polito.it}

\subjclass[2010]{35R02, 35Q35, 28A50}

\keywords{Network, transport equation, measure-valued solutions, distribution conditions}

\begin{abstract}
In this paper we formulate a theory of measure-valued linear transport equations on networks. The building block of our approach is the initial/boundary-value problem for the measure-valued linear transport equation on a bounded interval, which is the prototype of an arc of the network. For this problem we give an explicit representation formula of the solution, which also considers the total mass flowing out of the interval. Then we construct the global solution on the network by gluing all the measure-valued solutions on the arcs by means of appropriate distribution rules at the vertexes. The measure-valued approach makes our framework suitable to deal with multiscale flows on networks, with the microscopic and macroscopic phases represented by Lebesgue-singular and Lebesgue-absolutely continuous measures, respectively, in time and space.
\end{abstract}

\maketitle

\section{Introduction}
\label{sect:intro}
In recent times there has been an increasing interest in the notion of measure-valued solutions to evolution equations. Compared to standard approaches based on classical and weak solutions, the measure-theoretic setting allows one to better describe some interesting phenomena such as aggregation, congestion and pattern formation in a multiscale perspective. Several of these phenomena occur in applications such as vehicular traffic, data transmission, crowd motion, supply chains, where the state of the system evolves on a network, see e.g.~\cite{camilli2016SICON,dapice2009QAM,fermo2015M3AS,garavello2006BOOK,mugnolo2014BOOK}.

In order to extend the measure-valued approach to these irregular geometric structures, in this paper we study measure-valued solutions to a linear transport process defined on a network. For classical and weak solutions to transport equations on networks we refer the reader for example to~\cite{engel2008NHM,garavello2006BOOK,pokornyi2004JMS}.

The measure-valued approach in Euclidean spaces relies on the notion of push-forward of measures along the trajectories of a vector field describing the transport paths~\cite{ambrosio2008BOOK,canizo2011M3AS,cristiani2014BOOK,piccoli2014ARMA}. The study of these problems in bounded domains poses additional difficulties, especially concerning the behaviour at the boundaries of the transported measure. For problems on networks similar difficulties arise at the vertexes.

Our analysis is inspired by the results in~\cite{evers2015JDE,evers2016SIMA}, where measure-valued transport equations are studied in a bounded interval. We also refer to~\cite{gwiazda2012SIMA}, where the authors consider instead measure-valued solutions to non-linear transport problems with measure transmission conditions at nodal points, i.e. points where the velocity vanishes.

Consider a network $\Gamma=(\cV,\,\cE)$, where $\cV=\{V_i\}_{i\in I}$ is the set of vertexes and $\cE=\{E_j\}_{j\in J}$ is the set of arcs. We assume that the network is oriented and that a strictly positive, autonomous and Lipschitz continuous velocity field $v_j$ is defined on each arc $E_j$. Our aim is to describe the evolution of a mass distribution on the network $\Gamma$ transported by the velocity field $v(x)=\sum_{j\in J}v_j(x)\chi_{E_j}(x)$. For this we will make extensive use of the fundamental fact that a generic measure $\mu$ can be written as the superposition of elementary Dirac masses, i.e.
\begin{equation}
	\mu=\int_{\supp{\mu}}\delta_{x}\,d\mu(x),
	\label{superpos}
\end{equation}
where $\supp{\mu}$ denotes the support of $\mu$ belonging to an appropriate $\sigma$-algebra. This representation formula has to be understood in the sense of Bochner integrals.

From~\eqref{superpos} it follows that if we are able to define the transport of an atomic measure $\delta_x$ on the network then by linearity we can transport the whole distribution $\mu$. Hence, let us assume that the mass distribution $\mu_0$ at the initial time $t=0$ is given by a Dirac measure $\delta_{x_0}$, with $x_0\in E_j$ for some $j\in J$. If we postulate the conservation of the mass then in the time interval $(0,\,\tau)$ where the mass remains inside the arc $E_j$ the evolution of $\mu_0$ is governed by the continuity equation
\begin{equation}
	\partial_t\mu^j_t+\partial_x(v_j(x)\mu^j_t)=0,
	\label{CE}
\end{equation}
$\mu^j_t$ being a spatial measure denoting the mass distribution along the arc $E_j$ at time $t$.

For $t<\tau$ the solution to~\eqref{CE} is given by the push-forward of $\mu_0$ by means of the flow map
$$ \Phi^j_t(0,\,x_0):=x_0+\int_0^t v_j(\Phi^j_s(0,\,x_0))\,ds, $$
which describes the trajectory issuing from the point $x_0$ at time $t=0$ and arriving at the point $\Phi^j_t(x_0,\,0)\in E_j$ at time $t$. Consequently, $\mu^j_t$ is characterised as $\mu^j_t(A)=\mu_0((\Phi^j_t)^{-1}(A))$ for any measurable set $A\subseteq E_j$. Hence if  $\mu_0=\delta_{x_0}$ then $\mu^j_t=\delta_{\Phi^j_t(x_0,\,0)}$ for $t\in (0,\,\tau)$.

At $t=\tau$ the trajectory $t\mapsto\Phi^j_t(x_0,\,0)$ hits the final vertex $V_i$ of the arc $E_j$. Assuming that mass concentration at the vertexes of the network is not admitted, fractions $p^i_{jk}$ of the mass carried by $\delta_{\Phi^j_\tau(x_0,\,0)}$ have then to be distributed on each outgoing arc $E_k$ which originates from $V_i$.

This preliminary discussion sketches the main ideas that we intend to follow in order to tackle the global problem on the network. We first consider a local problem, namely a transport equation on each single arc with a measure acting as a source term (boundary condition) at the initial vertex. For this local problem we formulate an appropriate notion of measure-valued solution, for which we give a representation formula taking into account also the mass which flows out of the arc. Then we glue all the solutions on the single arcs by means of appropriate mass distribution rules at the vertexes, thereby constructing the global solution on the network.

In more detail, the paper is organised as follows. In Section~\ref{sec2} we introduce some notations and assumptions for the problem, while in Section~\ref{sec:measure} we review some basic facts about the measure-theoretic setting in which we will frame our analysis. In Section~\ref{sec:interval} we study the initial/boundary-value problem for the transport equation on a single bounded interval, which is the prototype of an arc of the network, then in Section~\ref{sec:network} we move to the problem on networks. Finally, in Section~\ref{sec:examples} we construct explicit measure-valued solutions on simple networks, which constitute preliminary examples of the application of our theory to vehicular traffic.

\section{Preliminary definitions and statement of the problem} \label{sec2}
We start by describing the constitutive elements of the problem.

\begin{definition}[Network]
A \emph{network} $\Gamma$ is a pair $(\cV,\,\cE)$ where $\cV:=\{V_i\}_{i\in I}$ is a finite collection of vertexes and $\cE:=\{E_j\}_{j\in J}$ is a finite collection of continuous non-self-intersecting oriented arcs whose endpoints belong to $\mathcal{V}$. Each arc $E_j$ is parameterised by a smooth function $\pi_j:[0,1]\to\R^n$. We assume that the network is connected and equipped with the topology induced by the minimum path distance.

Given a vertex $V_i\in\cV$, we say that an arc $E_j\in\cE$ is \emph{outgoing} (respectively, \emph{incoming}) if $V_i=\pi_j(0)$ (respectively, if $V_i=\pi_j(1)$). We denote by $d_O^i$ (respectively, by $d_I^i$) the number of outgoing (respectively, incoming) arcs in $V_i$ and by $d^i:=d_I^i+d_O^i$ the \emph{degree} of $V_i$. We say that a vertex $V_i$ is \emph{internal} if $d_I^i\cdot d_O^i>0$, that it is a \emph{source} if $d_O^i=d^i$ and finally that it is a \emph{well} if $d_I^i=d^i$.

We denote by $\cI$ the set of indexes $i\in I$ corresponding to the internal vertexes, by $\cS$ the one corresponding to the sources and by $\cW$ the one corresponding to the wells.
\end{definition}

\begin{definition}[Distribution matrices]
For an internal  vertex $V_i$, $i\in\cI$, and for $t>0$ we consider a \emph{distribution} (or \emph{transition}) \emph{matrix} $\{p^{i}_{kj}(t)\}_{k,\,j=1}^{d_I^i,\,d_O^i}$ such that
\begin{align}
	& p^{i}_{kj}(t)\geq 0 \nonumber \\
	& \sum_{j=1}^{d_{O}^i}p^i_{kj}(t)=\sum_{j\,:\,V_i=\pi_j(0)}p^i_{kj}(t)=1. \label{eq:pijk.sum.1}
\end{align}
Here $p^i_{kj}(t)$ represents the fraction of mass which at time $t$ flows from the incoming arc $E_k$ to the outgoing arc $E_j$ through the vertex $V_i$. Condition~\eqref{eq:pijk.sum.1} corresponds to the fact that, unlike~\cite{evers2015JDE,evers2016SIMA,gwiazda2012SIMA}, the mass cannot concentrate at the vertexes of the network.

For a source vertex $V_i$, $i\in\cS$, we consider instead a \emph{distribution vector} $\{p^{i}_{j}(t)\}_{j=1}^{d_O^i}$ such that
\begin{align}
	& p^{i}_{j}(t)\geq 0 \nonumber \\
	& \sum_{j=1}^{d_{O}^i}p^i_{j}(t)=\sum_{j\,:\,V_i=\pi_j(0)}p^i_j(t)=1. \label{eq:pij.sum.1}
\end{align}
\end{definition}

\begin{definition}[Velocity field]
On each arc $E_j\in\cE$ we assume that a strictly positive, bounded and Lipschitz continuous velocity $v_j:[0,\,1]\to (0,\,\vmax]$ is defined, with $0<\vmax<+\infty$. We denote by $v=\sum_{j\in J}v_j\chi_{E_j}$ the velocity field on the network ($\chi_{E_j}$ being the characteristic function of the arc $E_j$).
\end{definition}

\begin{definition}[Initial and boundary data]
We prescribe the initial mass distribution over $\Gamma$ as a positive measure $\mu_0=\sum_{j\in J}\mu_0^j$ with $\supp{\mu_0^j}\subseteq E_j$ for all $j$. Furthermore, at all the source vertexes $V_i$, $i\in\cS$, we prescribe an inflow measure $\varsigma^i$ with $\supp{\varsigma^i}\subseteq [0,\,T]$, $T>0$ being a certain final time.
\end{definition}

To define the transport of the initial measure $\mu_0$ and of the inflow measures $\{\varsigma^i\}_{i\in\cS}$ on the network $\Gamma$ we describe their evolution inside an arc. On each arc $E_j$ we take into account the inflow mass coming from the initial vertex $\pi_j(0)$ and we describe how the outflow mass leaving from the final vertex $\pi_j(1)$ is distributed to the corresponding outgoing arcs. In detail, we fix a final time $T>0$ and we consider the following system of measure-valued differential equations on $\Gamma\times [0,\,T]$:
\begin{equation}
	\begin{cases}
		\partial_{t}\mu^{j}+\partial_x(v_j(x)\mu^j)=0 & x\in E_j,\,t\in (0,\,T],\,j\in J \\[2mm]
		\mu_{t=0}^j=\mu_0^j & x\in E_j,\,j\in J \\[2mm]
		\mu^{j}_{V_i=\pi_j(0)}=
			\begin{cases}
				\sum\limits_{k=1}^{d_i^I}p^i_{kj}(t)\mu_{V_i=\pi_k(1)}^k & \text{if } i\in\cI \\[3mm]
				p^i_j(t)\varsigma^i & \text{if } i\in\cS,
			\end{cases} \\
	\end{cases}
	\label{problemontheroad}
\end{equation}
where by $\mu^j_{V_i=\pi_j(0)}$ we mean the measure flowing into the arc $E_j$ from its initial vertex $V_i=\pi_j(0)$ while by $\mu_{V_i=\pi_k(1)}^k$ we mean the measure flowing out of the arc $E_k$ from its final vertex $V_i=\pi_k(1)$. Moreover, by $p^{i}_{kj}(t)\mu_{V_i=\pi_k(1)}^k$ we mean a measure (in time) which is absolutely continuous with respect to $\mu_{V_i=\pi_k(1)}^k$ with density $p^{i}_{kj}(t)$ (and analogously for $p^i_j(t)\varsigma^i$).

For an internal vertex, the inflow measure is given by the mass flowing in $E_j$ from the arcs incident to $V_i=\pi_j(0)$ according to the distribution rule given by the distribution matrix $\{p^i_{kj}(t)\}$. For a source vertex, the inflow measure is the fraction $p^{i}_{j}(\cdot)$ of the prescribed datum $\varsigma^i$ entering $E_j$. The outflow measure, i.e. the part of the mass leaving the arc from the final vertex $\pi_j(1)$, is not given a priori but depends on the evolution of the measure $\mu$ inside the arc.

The detailed study of problem~\eqref{problemontheroad} is postponed to Section~\ref{sec:network}. Before that, we introduce an appropriate measure theoretic setting, see Section~\ref{sec:measure}, and consider preliminarily the problem on a single arc, see Section~\ref{sec:interval}.

\section{Measures and norms}
\label{sec:measure}
We introduce a space of measures with an appropriate norm where we consider the solutions to our measure-valued transport equations. Moreover, since the notion of solution is based on the superposition principle~\eqref{superpos}, we briefly describe the measure-theoretic setting which guarantees the validity of this formula. We refer for details to~\cite{ambrosio2008BOOK,bogachev2007BOOK,evers2015JDE,worm2010PhD}.

Let $\cT$ be a topological space with $\cB(\cT)$ the Borel $\sigma$-algebra in $\cT$. We denote by $\cM(\cT)$ the space of finite Borel measures on $\cT$ and by $\cM^{+}(\cT)$ the convex cone of the positive measures in $\cM(\cT)$. For $\mu\in\cM(\cT)$ and a bounded measurable function $\varphi:\cT\to\R$ we write
$$ \dual{\mu}{\varphi}:=\int_{\cT}\varphi\,d\mu. $$

Given a Borel measurable vector field $\Phi:\cT\to\cT$, the \emph{push-forward} of the measure $\mu$ under the action of $\Phi$ is an operation on $\mu$ which produces the new measure $\Phi\#\mu\in\cM(\cT)$ defined by
$$ (\Phi\#\mu)(E):=\mu(\Phi^{-1}(E)), \qquad \forall\,E\in\cB(\cT). $$
We immediately observe that $\dual{\Phi\#\mu}{\varphi}=\dual{\mu}{\varphi\circ\Phi}$.

Given a metric $d:\cT\times\cT\to\R_+$ in $\cT$, we denote by $BL(\cT)$ the Banach space of the bounded and Lipschitz continuous functions $\varphi:\cT\to\R$
equipped with the norm
$$ \norm{\varphi}_{BL}:=\norm{\phi}_{\infty}+\abs{\phi}_{L}, $$
where the semi-norm $\abs{\cdot}_L$ is defined by
$$ \abs{\varphi}_{L}:=\sup_{\substack{x,\,y\in\cT \\ x\ne y}}\frac{\abs{\varphi(y)-\varphi(x)}}{d(x,\,y)}. $$
Furthermore, we introduce a norm in $\cM(\cT)$ by taking the dual norm of $\norm{\cdot}_{BL}$:
$$ \normBL{\mu}:=\sup_{\substack{\varphi\in BL(\cT) \\ \norm{\varphi}_{BL}\leq 1}}\dual{\mu}{\varphi}. $$
It is easy to see that if $\mu\in\cM^+(\cT)$ then $\normBL{\mu}=\mu(\cT)$.

The space $(\cM(\cT),\,\normBL{\cdot})$ is in general \emph{not} complete, hence it is customary to consider its completion $\overline{\cM(\cT)}^{\normBL{\cdot}}$ with respect to the dual norm. However, the cone $\cM^+(\cT)$, which is a closed subset of $\cM(\cT)$ in the weak topology, is complete, although it is \emph{not} a Banach space because it is not a vector space. Since in our model we will consider only positive measures,   we restrict our attention to the complete metric space $(\cM^+(\cT),\,\normBL{\cdot})$ with the corresponding distance induced by the norm.

\begin{remark}
If $\cT$ is bounded the Kantorovich-Rubinstein's duality theorem implies that the norm $\normBL{\cdot}$ induces the \emph{$1$-Wasserstein distance} in $\cM^+(\cT)$.
\end{remark}

\begin{remark} The distance induced in $\cM(\cT)$ by the \emph{total variation norm}:
$$ \norm{\mu}_{TV}:=\sup_{\substack{\varphi\in C_b(\cT) \\ \norm{\varphi}_\infty\leq 1}}\dual{\mu}{\varphi}, $$
where $C_b(\cT)$ is the space of bounded continuous function on $\cT$, is another metric frequently used for measures. However, we observe that it may not be fully suited to transport problems where one wants to measure the distance between flowing mass distributions. Indeed, if we consider two points $x,\,y\in\cT$, $x\ne y$, and the corresponding Dirac mass distributions $\delta_x,\,\delta_y\in\cM^+(\cT)$ centred at them we see that
$$ \normBL{\delta_y-\delta_x}\leq d(x,\,y), \qquad \norm{\delta_y-\delta_x}_{TV}=2. $$
Hence the two measures are closer and closer in the norm $\normBL{\cdot}$ as the points $x,\,y$ approach, which is consistent with the intuitive idea of transport of mass distributions; while they are not in the total variation norm, no matter how close the points $x,\,y$ are.
\end{remark}

As alredy anticipated in Section~\ref{sect:intro}, for the subsequent development of the theory we will extensively use the following fact linked to the concept of \emph{Bochner integral}~\cite{bogachev2007BOOK,worm2010PhD}: any $\mu\in\cM^+(\cT)$ can be represented as a (continuous) sum of elementary masses in the form
$$ \mu=\int_{\cT}\delta_x\,d\mu(x) $$
as a Bochner integral in $(\overline{\cM(\cT)}^{\normBL{\cdot}},\,\normBL{\cdot})$.

\bigskip

We now specialise the previous definitions to the case $\cT=\Gamma\times [0,\,T]$, where $\Gamma\subset\R^n$ is a network. In particular, we will call $x$ the variable in each arc of $\Gamma$ and $t$ the variable in the interval $[0,\,T]$. We equip $\Gamma\times [0,\,T]$ with the distance
$$ d(x,\,y)+\abs{t-s}, \qquad (x,\,t),\,(y,\,s)\in\Gamma\times [0,\,T], $$
$d$ being the shortest path distance on $\Gamma$.

We consider the Borel $\sigma$-algebra $\cB(\Gamma\times [0,\,T])$ given by the union of the Borel $\sigma$-algebras $\cB([0,\,1]\times [0,\,T])$ corresponding to each arc $E_j$ of $\Gamma$. Thus $A\in\cB(\Gamma\times [0,\,T])$ if $(\pi_j^{-1},\,\operatorname{Id})(A\cap (E_j\times [0,\,T]))\in\cB([0,\,1]\times [0,\,T])$ for all $j\in J$, where $\operatorname{Id}$ denotes the identity mapping.

A measure $\mu$ belongs to $\cM(\Gamma\times [0,\,T])$ if each of its restrictions $\mu^j:=\mu\llcorner (E_j\times [0,\,T])$, $j\in J$, is a finite Borel measure on $E_j\times [0,\,T]$. We define the cone $\cM^+(\Gamma\times [0,\,T])$ analogously.

For $\mu\in\cM^+(\Gamma\times [0,\,T])$ and a bounded measurable function $\varphi:\Gamma\times [0,\,T]\to\R$ we write
\begin{equation}
	\dual{\mu}{\varphi}:=\sum_{j\in J}\int_{E_j\times [0,\,T]}\varphi\,d\mu^j.
 	\label{pairing}
\end{equation}

For a function $\varphi:\Gamma\times [0,\,T]\to\R$, we denote by $\varphi_j:[0,\,1]\times [0,\,T]\to\R$ its restriction to $E_j\times [0,\,T]$, i.e.:
$$ \varphi(x,\,t)=\varphi_j(y,\,t) \quad \text{for } x\in E_j,\ y=\pi_j^{-1}(x),\ t\in [0,\,T]. $$

A function $\varphi$ belongs to $BL(\Gamma\times [0,\,T])$ if it is continuous on $\Gamma$ and $\varphi_j\in BL([0,\,1]\times [0,\,T])$ for every $j\in J$.
For $\varphi\in BL(\Gamma\times [0,\,T])$ the norm $\norm{\varphi}_{BL(\Gamma\times [0,\,T])}$ is defined by
$$ \norm{\varphi}_{BL(\Gamma\times [0,\,T])}:=\sup_{j\in J}\norm{\varphi_j}_{BL([0,\,1]\times [0,\,T])}. $$
The corresponding dual norm $\normBL{\cdot}$ of a measure $\mu\in\cM(\Gamma\times [0,\,T])$ is given by
$$ \normBL{\mu}:=\sup_{\substack{\varphi\in BL(\Gamma\times [0,\,T]) \\ \norm{\varphi}_{BL(\Gamma\times [0,\,T])}\leq 1}}\dual{\mu}{\varphi}. $$

\section{The transport equation in a bounded interval}
\label{sec:interval}
In this section  we study the transport equation in a bounded interval. Actually, we start by focusing on the problem of prescribing appropriate initial and boundary conditions to the differential equation in $\R^+\times\R^+$, which is an unbounded domain with boundary; then we will restrict the results to a truly bounded domain.

Consider the conservation law
\begin{equation}
   \partial_{t}\mu+\partial_{x}(v(x)\mu)=0, \qquad (x,\,t)\in\R^+\times\R^+,
   \label{CL_R}
\end{equation}
where $v:\R^+\to\R$ is a strictly positive, bounded and Lipschitz continuous velocity field, so that the flow is one-directional and depends only on the space variable $x$. Given $\mu\in\cM^+(\R^+_0\times\R^+_0)$, where $\R^+_0:=[0,\,+\infty)$, owing to the disintegration theorem~\cite[Section 5.3]{ambrosio2008BOOK} we can decompose this measure by means of its projection maps on the coordinate axes:
\begin{itemize}
\item using the projection with respect to the space variable we can write
\begin{equation}
	\mu(dx\,dt)=\mu_t(dx)\otimes dt,
	\label{eq:dec.space-time}
\end{equation}
where $dt$ is the Lebesgue measure in time in $\R^+_0$ and $\mu_t\in\cM^+(\R^+_0\times\{t\})\equiv\cM^+(\R^+_0)$ for a.e. $t\in\R^+_0$. The measure $\mu_t$ is called the \emph{conditional measure}, or \emph{trace}, of $\mu$ with respect to $t$ on the fibre $\R^+_0\times\{t\}$;

\item similarly, projecting with respect to the time variable we can write
\begin{equation}
	\mu(dx\,dt)=\frac{\nu_x(dt)}{v(x)}\otimes dx,
	\label{eq:dec.time-space}
\end{equation}
where $dx$ is the Lebesgue measure in space in $\R^+_0$ and $\nu_x\in\cM^+(\{x\}\times\R^+_0)\equiv\cM^+(\R^+_0)$ for a.e. $x\in\R^+_0$. The measure $\nu_x$ is called the \emph{conditional measure}, or \emph{trace}, of $\mu$ with respect to $x$ on the fibre $\{x\}\times\R^+_0$.
\end{itemize}

\begin{remark}
The coefficient $\frac{1}{v(x)}$ in the decomposition~\eqref{eq:dec.time-space} is considered for dimensional reasons, so that $\nu_x$ represents actually the mass distributed on the fibre $\{x\}\times\R^+_0$.

We incidentally notice that if $\mu$ solves~\eqref{CL_R} then the mapping $x\mapsto\nu_x$ solves the equation $\partial_x\nu_x+\bar{\partial}_t\nu_x=0$, where $\bar{\partial}_t:=\frac{1}{v(x)}\partial_t$. As far as the decomposition~\eqref{eq:dec.space-time} is concerned, the mapping $t\mapsto\mu_t$ solves instead the equation $\partial_t\mu_t+\partial_x(v(x)\mu_t)=0$.
\end{remark}

Relying on the concept of conditional measures, we formulate the following initial/boundary-value problem for~\eqref{CL_R}:
\begin{equation}
	\begin{cases}
		\partial_{t}\mu+\partial_{x}(v(x)\mu)=0 & (x,\,t)\in\R^+\times\R^+ \\
		\mu_{t=0}=\mu_0\in\cM^+(\R^+_0\times\{0\}) \\
		\nu_{x=0}=\nu_0\in\cM^+(\{0\}\times\R^+_0)
	\end{cases}
	\label{problemsingle}
\end{equation}
with $\mu\in\cM^+(\R^+_0\times\R^+_0)$, where:
\begin{itemize}
\item assigning an initial condition at $t=0$ amounts to prescribing the trace of $\mu$ on the fibre $\R^+_0\times\{0\}$ according to the decomposition~\eqref{eq:dec.space-time};
\item assigning a boundary condition at $x=0$ amounts to prescribing the trace of $\mu$ on the fibre $\{0\}\times\R^+_0$ according to the decomposition~\eqref{eq:dec.time-space}.
\end{itemize}
In order to give a suitable notion of measure-valued solution to~\eqref{problemsingle}, we preliminarily introduce integration-by-parts formulas useful to deal with the initial and boundary data. Let $C^1_0(\R^+_0\times\R^+_0)$ be the space of continuous functions in $\R^+_0\times\R^+_0$ which are differentiable in $\R^+\times\R^+$ and vanish for $x,\,t\to +\infty$. For $\mu\in\cM^+(\R^+_0\times\R^+_0)$ and $\varphi\in C^1_0(\R^+_0\times\R^+_0)$ we set:
\begin{align*}
	\dual{\partial_t\mu}{\varphi} &:= -\dual{\mu}{\partial_t\varphi}-\int_{\R^+_0}\varphi(x,\,0)\,d\mu_0(x), \\
	\dual{\partial_x(v(x)\mu)}{\varphi} &:= -\dual{\mu}{v(x)\partial_x\varphi}-\int_{\R^+_0}\varphi(0,\,t)\,d\nu_0(t),
\end{align*}
where $\dual{\cdot}{\cdot}$ denotes the duality pairing between measures and test functions in $\R^+_0\times\R^+_0$, i.e. $\dual{\mu}{\varphi}=\iint_{\R^+_0\times\R^+_0}\varphi(x,\,t)\,d\mu(x,\,t)$. Notice that if $\varphi$ is compactly supported in $\R^+\times\R^+$ then the previous formulas agree with the usual definition of the distributional derivatives of $\mu$.

\begin{remark}
With a slight abuse of notation, in the following we will denote
$$ \int_{\R^+_0}\varphi(x,\,0)\,d\mu_0(x)=:\dual{\mu_0}{\varphi}, \qquad
	\int_{\R^+_0}\varphi(0,\,t)\,d\nu_0(t)=:\dual{\nu_0}{\varphi}, $$
the difference between duality pairings in $\R^+_0\times\R^+_0$ and in $\R^+_0\times\{0\}$ or $\{0\}\times\R^+_0$ being clear from the measures used.
\end{remark}

Thanks to these formulas, we are in a position to introduce the following notion of measure-valued solution to~\eqref{problemsingle}:
\begin{definition} \label{def_weak}
Given $\mu_0\in\cM^+(\R^+_0\times\{0\})$ and $\nu_0\in\cM^+(\{0\}\times\R^+_0)$, a measure-valued solution to~\eqref{problemsingle} is a finite measure $\mu\in\cM^+(\R^+_0\times \R^+_0)$ such that
\begin{equation}
	\dual{\mu}{\partial_t\varphi+v(x)\partial_x\varphi}=-\dual{\mu_0}{\varphi}-\dual{\nu_0}{\varphi}, \quad
		\forall\,\varphi\in C^1_0(\R^+_0\times\R^+_0).
	\label{weakcond}
\end{equation}
\end{definition}

Since~\eqref{problemsingle} is a linear problem, its solution can be obtained from the superposition of two measures $\mu^1,\,\mu^2\in\cM^+(\R^+_0\times \R^+_0 )$, where $\mu^1$ is the solution to~\eqref{problemsingle} with data $\mu_{t=0}=\mu_0$ and $\nu_{x=0}=0$ while $\mu^2$ is the solution to~\eqref{problemsingle} with data $\mu_{t=0}=0$ and $\nu_{x=0}=\nu_0$. This is doable in a standard way in terms of the push-forward of the initial and boundary data by means of appropriate vector fields in $\R^+_0\times\R^+_0$, cf.~\cite{ambrosio2008BOOK}. With this approach time and space play the same role, the former being understood in particular as an additional state variable of the system.

However, for the next purposes it is convenient to characterise the solution $\mu$ to~\eqref{problemsingle} by means of the traces of $\mu^1$ and $\mu^2$ over the fibres $\R^+_0\times\{t\}$, $t>0$; i.e.
$$ \mu(dx\,dt)=(\mu^1_t(dx)+\mu^2_t(dx))\otimes dt, $$
where $\mu^1_t$, $\mu^2_t$ are given by the transport of $\mu_0$, $\nu_0$, respectively, along the characteristics generated in $\R^+\times\R^+$ by the velocity field $v$.

In order to obtain a formula for $\mu^1_t$, let $\Phi_t=\Phi_t(x,\,0)$ be the position at time $t>0$ of the particle which is in $x\in\R^+_0$ at time $t=0$ and which moves following the velocity field $v=v(x)$:
\begin{equation}
	\begin{cases}
		\dfrac{d}{dt}\Phi_t(x,\,0)=v(\Phi_t(x,\,0)), & t>0 \\[2mm]
		\Phi_0(x,\,0)=x.
	\end{cases}
	\label{eq:char1}
\end{equation}
By standard results, it is well known that
$$ \mu_t^1=\Phi_t\#\mu_0=\int_{\R^+_0}\delta_{\Phi_t(x,\,0)}\,d\mu_0(x)\in\cM^+(\R^+_0\times\{t\}), $$
where $\#$ is the push-forward operator, $\delta$ is the Dirac delta measure, and the integral at the right-hand side is understood in the sense of Bochner.

Likewise, to obtain a formula for $\mu^2_t$ we consider the characteristic lines issuing from the $t$ axis. In particular, we denote now by $\Phi_t(0,\,s)$ the position at time $t>0$ of the particle which is in $x=0$ at time $s\in\R^+_0$ and which moves following the velocity field $v=v(x)$:
 \begin{equation}
	\begin{cases}
		\dfrac{d}{dt}\Phi_t(0,\,s)=v(\Phi_t(0,\,s)), & t>s \\[2mm]
		\Phi_s(0,\,s)=0.
	\end{cases}
	\label{eq:char2}
\end{equation}
By transporting the mass $\nu_0$ along these characteristics we can write
$$ \mu^2_t=\int_{[0,\,t]}\delta_{\Phi_t(0,\,s)}\,d\nu_0(s)\in\cM^+(\R^+_0\times \{t\}), $$
where the integral is again meant in the sense of Bochner.

Summing up, we consider the following representation formula for $\mu$:
\begin{equation}
	\mu(dx\,dt)=\left(\int_{\R^+_0}\delta_{\Phi_t(\xi,\,0)}(dx)\,d\mu_0(\xi)
		+\int_{[0,\,t]}\delta_{\Phi_t(0,\,s)}(dx)\,d\nu_0(s)\right)\otimes dt
	\label{solcand1}
\end{equation}
and we check that it actually defines a solution to~\eqref{problemsingle} in the sense of Definition~\ref{def_weak}. To this purpose we preliminarily observe that, since $\mu_t^1=\Phi_t\#\mu_0$, for every (bounded and measurable) function $f:\R^+_0\to\R$ it results
\begin{equation}
	\int_{\R^+_0}f(x)\,d\mu^1_t(x)=\int_{\R_0^+}f(\Phi_t(x,\,0))\,d\mu_0(x).
	\label{c1_a}
\end{equation}
We can obtain a similar formula for $\mu^2_t$ by observing that, given a simple function $f:\R^+_0\to\R$, $f(x)=\sum_{k=1}^{N}\alpha_k\chi_{A_k}(x)$, where $\{A_k\}_{k=1}^N$ is a measurable finite disjoint partition of $\R^+_0$, it results
\begin{align*}
	\int_{\R^+_0}f(x)\,d\mu^2_t(x) &= \sum_{k=1}^{N}\alpha_k \mu^2_t(A_k)
		= \sum_{k=1}^{N}\alpha_k\int_{[0,\,t]}\delta_{\Phi_t(0,\,s)}(A_k)\,d\nu_0(s) \\
	&= \sum_{k=1}^{N}\alpha_k\int_{[0,\,t]}\chi_{A_k}(\Phi_t(0,\,s))\,d\nu_0(s)
		=\int_{[0,\,t]}\sum_{k=1}^{N}\alpha_k\chi_{A_k}(\Phi_t(0,\,s))\,d\nu_0(s) \\
	&= \int_{[0,\,t]}f(\Phi_t(0,\,s))\,d\nu_0(s).
\end{align*}
Approximating a measurable function $f$ with a sequence of simple functions we get in general
\begin{equation}
	\int_{\R^+_0}f(x)\,d\mu^2_t(x)=\int_{[0,\,t]}f(\Phi_t(0,\,s))\,d\nu_0(s).
	\label{c1_b}
 \end{equation}
Interestingly, an integral with respect to the $x$ variable is converted into one with respect to the $t$ variable.

Plugging~\eqref{solcand1} into the left-hand side of~\eqref{weakcond} and using~\eqref{c1_a},~\eqref{c1_b} we discover:
\begin{align*}
	\dual{\mu}{\partial_t\varphi+v(x)\partial_x\varphi} &=
		\int_{\R^+_0}\int_{\R^+_0}\Bigl(\partial_t\varphi(\Phi_t(x,\,0),\,t)+v(\Phi_t(x,\,0))\partial_x\varphi(\Phi_t(x,\,0),\,t)\Bigr)\,d\mu_0(x)\,dt \\
		&\phantom{=} +\int_{\R^+_0}\int_{[0,\,t]}(\partial_t\varphi(\Phi_t(0,\,s),\,t)+v(\Phi_t(0,\,s))\partial_x\varphi(\Phi_t(0,\,s),\,t))\,d\nu_0(s)\,dt \\
	&= \int_{\R^+_0}\int_{\R^+_0}\frac{d}{dt}\varphi(\Phi_t(x,\,0),\,t)\,d\mu_0(x)\,dt
		+\int_{\R^+_0}\int_{[0,\,t]}\frac{d}{dt}\varphi(\Phi_t(0,\,s),\,t)\,d\nu_0(s)\,dt,
\intertext{where in the last passage we have invoked~\eqref{eq:char1},~\eqref{eq:char2}. By switching the order of integration in view of Fubini-Tonelli's Theorem we further obtain}
	&= \int_{\R^+_0}\int_{\R^+_0}\frac{d}{dt}\varphi(\Phi_t(x,\,0),\,t)\,dt\,d\mu_0(x)
		+ \int_{\R^+_0}\int_{[s,\,+\infty)}\frac{d}{dt}\varphi(\Phi_t(0,\,s),\,t)\,dt\,d\nu_0(s) \\
	&=\int_{\R^+_0}\Bigl[\varphi(\Phi_t(x,\,0),\,t)\Bigr]_{t=0}^{t=+\infty}\,d\mu_0(x)
		+\int_{\R^+_0}\Bigl[\varphi(\Phi_t(0,\,s),\,t)\Bigr]_{t=s}^{t=+\infty}\,d\nu_0(s) \\
	&= -\int_{\R^+_0}\varphi(x,\,0)\,d\mu_0(x)-\int_{\R^+_0}\varphi(0,\,s)\,d\nu_0(s) \\
	&= -\dual{\mu_0}{\varphi}-\dual{\nu_0}{\varphi},
\end{align*}
which confirms that~\eqref{solcand1} is indeed a measure-valued solution to~\eqref{problemsingle}. Uniqueness of such a solution is a consequence of continuous dependence estimates on the initial and boundary data, which can be proved by standard arguments in literature, cf.~\cite{ambrosio2008BOOK}. In conclusion, for the transport problem in $\R^+\times\R^+$ we have the following well-posedness result:
\begin{theorem} \label{T1}
For $\mu_0\in\cM^+(\R^+_0\times\{0\})$, $\nu_0\in\cM^+(\{0\}\times\R^+_0)$ there exists a unique measure-valued solution to~\eqref{problemsingle} in the sense of Definition~\ref{def_weak}, which can be represented by~\eqref{solcand1}.
\end{theorem}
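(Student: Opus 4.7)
My plan is to split the statement into existence (essentially already carried out in the verification preceding the statement) and uniqueness (which requires a duality argument based on the backward transport equation).

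For existence, I would first check that the right-hand side of \eqref{solcand1} is a well-defined element of $\cM^+(\R^+_0\times\R^+_0)$. Since $v$ is Lipschitz with some constant $L$, Gr\"onwall gives $\abs{\Phi_t(\xi,0)-\Phi_t(\eta,0)}\leq e^{Lt}\abs{\xi-\eta}$, and because $\normBL{\delta_a-\delta_b}\leq\abs{a-b}$, the maps $\xi\mapsto\delta_{\Phi_t(\xi,0)}$ and $s\mapsto\delta_{\Phi_t(0,s)}$ are Lipschitz continuous from $\R^+_0$ into $(\cM^+(\R^+_0),\normBL{\cdot})$. Hence the Bochner integrals appearing in \eqref{solcand1} exist in the completion $\overline{\cM(\R^+_0)}^{\normBL{\cdot}}$, and since $\cM^+(\R^+_0)$ is closed in this completion the resulting object is a positive measure. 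Mass conservation under push-forward gives $\mu_t^1(\R^+_0)+\mu_t^2(\R^+_0)\leq\mu_0(\R^+_0)+\nu_0(\R^+_0)<+\infty$, so $\mu$ is finite. The weak identity \eqref{weakcond} is then exactly the calculation already displayed, combining the chain rule along the characteristics \eqref{eq:char1}--\eqref{eq:char2}, the push-forward formulas \eqref{c1_a}--\eqref{c1_b}, Fubini--Tonelli, and the fundamental theorem of calculus.

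For uniqueness, let $\mu,\tilde\mu$ be two solutions with the same data and set $\eta:=\mu-\tilde\mu$, a finite signed measure. Subtracting the two instances of \eqref{weakcond} cancels the boundary terms and yields
\begin{equation*}
\dual{\eta}{\partial_t\varphi+v(x)\partial_x\varphi}=0,\qquad\forall\,\varphi\in C^1_0(\R^+_0\times\R^+_0).
\end{equation*}
Given an arbitrary $\psi\in C^\infty_c(\R^+\times\R^+)$, I would construct $\varphi$ by integrating $\psi$ backwards along characteristics,
\begin{equation*}
\varphi(x,t):=-\int_t^{+\infty}\psi(\Phi_\tau(x,t),\tau)\,d\tau,
\end{equation*}
which satisfies $\partial_t\varphi+v(x)\partial_x\varphi=\psi$ in $\R^+\times\R^+$. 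Compact support of $\psi$, strict positivity of $v$ and Lipschitz continuity of the flow $\Phi$ ensure that $\varphi$ is itself compactly supported and of class $C^1$ up to the boundary of the quadrant, so it is an admissible test function. Substituting back yields $\dual{\eta}{\psi}=0$ for every $\psi\in C^\infty_c(\R^+\times\R^+)$, whence $\eta$ vanishes in the interior; the traces of $\eta$ on the fibres $\R^+_0\times\{0\}$ and $\{0\}\times\R^+_0$ are zero by hypothesis, so $\eta\equiv 0$.

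The main obstacle I anticipate is the uniqueness step, specifically the verification that the backward-constructed $\varphi$ genuinely belongs to $C^1_0(\R^+_0\times\R^+_0)$: one must propagate the smoothness of $\psi$ along characteristics up to the boundary $\{x=0\}\cup\{t=0\}$ of the quadrant and check the decay at infinity in both $x$ and $t$. The strict positivity of $v$ is essential here to ensure that no characteristic stalls and that the backward integral terminates in finite time, while the Lipschitz continuity of the flow provides the required regularity. Once this is granted, everything else is a routine consequence of the push-forward calculus and of Fubini--Tonelli already exploited in the displayed calculation.
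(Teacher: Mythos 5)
Your existence argument coincides with the paper's: Theorem~\ref{T1} is proved there by exactly the displayed computation preceding its statement (push-forward formulas~\eqref{c1_a}--\eqref{c1_b}, the chain rule along~\eqref{eq:char1}--\eqref{eq:char2}, Fubini--Tonelli and the fundamental theorem of calculus), and your preliminary check that~\eqref{solcand1} is a well-defined finite positive measure is a sensible addition that the paper leaves implicit. Where you genuinely diverge is uniqueness: the paper disposes of it in one sentence, invoking continuous dependence estimates on the data ``by standard arguments in literature'' (and indeed the quantitative analogue is only worked out later, for the traces $\mu_T$, $\nu_1$, in the proof of Theorem~\ref{existence}), whereas you give an explicit adjoint argument, solving $\partial_t\varphi+v(x)\partial_x\varphi=\psi$ backward along characteristics. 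This buys a self-contained qualitative proof; note that in this one-dimensional autonomous setting the required $C^1$ regularity of $\varphi$ is genuinely available even for merely Lipschitz $v$, since $\partial_x\Phi_\tau(x,t)=v(\Phi_\tau(x,t))/v(x)$ is continuous, so the obstacle you anticipate is surmountable.

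One point in your uniqueness step is imprecise as written. After obtaining $\dual{\eta}{\psi}=0$ for all $\psi\in C^\infty_c(\R^+\times\R^+)$ you conclude that $\eta$ vanishes in the open quadrant, and then dismiss the coordinate axes because ``the traces of $\eta$ are zero by hypothesis.'' But $\mu_0$ and $\nu_0$ enter Definition~\ref{def_weak} only as prescribed measures on the right-hand side of~\eqref{weakcond}; nothing in the definition identifies them with restrictions of $\mu$ to the axes, so $\eta\llcorner(\{x=0\}\cup\{t=0\})$ is not controlled by the hypothesis and could a priori be nonzero. The fix is to run your duality construction for $\psi$ continuous and compactly supported on the \emph{closed} quadrant, not vanishing on the axes: the resulting $\varphi$ still lies in $C^1_0(\R^+_0\times\R^+_0)$ and satisfies the transport identity up to the boundary, which forces $\dual{\eta}{\psi}=0$ for this larger class and hence $\eta\equiv 0$ on all of $\R^+_0\times\R^+_0$. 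With that adjustment the argument is complete.
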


We now pass to consider the transport problem on the bounded domain $Q:=(0,\,1)\times (0,\,T)$, $T>0$, i.e.
\begin{equation}
	\begin{cases}
		\partial_t\mu+\partial_x(v(x)\mu)=0, & (x,\,t)\in Q \\
		\mu_{t=0}=\mu_0\in\cM^+([0,\,1]\times\{0\}) \\
		\nu_{x=0}=\nu_0\in\cM^+(\{0\}\times [0,\,T])
	\end{cases}
	\label{problemsingleQ}
\end{equation}
for a given bounded, strictly positive and Lipschitz continuous velocity field $v:[0,\,1]\to (0,\,\vmax]$. The solution to this problem can be obtained by restricting to $Q$ the measure $\mu$ solving~\eqref{problemsingle} (with the velocity field $v$ possibly extended to the whole $\R^+_0$ as, e.g. $v(x)=v(1)$ for $x\ge 1$). Therefore we are going to consider the restriction of $\mu$ to $Q$ defined as the measure $\mu\llcorner Q$ such that
$$ \mu\llcorner Q(E):=\mu(E\cap Q) $$
for every measurable set $E\subseteq\R^+_0\times\R^+_0$.

In particular, in view of the application of this problem to a network, it is important to characterise the traces of $\mu\llcorner Q$ on the fibres $[0,\,1]\times\{T\}$ and $\{1\}\times[0,\,T]$, which depend on the transport of $\mu_0$ and $\nu_0$ inside $Q$.

Let us introduce the following quantities:
\begin{align}
	\tau(x) &:= \inf\{t\geq 0\,:\,\Phi_t(x,\,0)=1 \}, \quad x\in [0,\,1] \label{exitime} \\
	\sigma(s) &:= \inf\{t\geq s\,:\,\Phi_t(0,\,s)=1\}, \quad s\in [0,\,T] \label{exitime2}
\end{align}
corresponding to the time needed to the characteristic line issuing from either $(x,\,0)$, in case of $\tau(x)$, or $(0,\,s)$, in case of $\sigma(s)$, to hit the boundary $x=1$. Since the considered transport problem is linear, in particular the velocity field $v$ does not depend on the measure $\mu$ itself, both $\tau$ and $\sigma$ are one-to-one, thus invertible. Moreover $\tau$ decreases with $x$ while $\sigma$ increases with $s$ and, in particular, $\sigma(s)=\tau(0)+s$ because $v$ is autonomous.

Recalling~\eqref{solcand1} and using $\tau$, $\sigma$ we write the trace of $\mu\llcorner Q$ on the fibre $[0,\,1]\times\{T\}$ as (cf. Figure~\ref{fig:characteristics})
\begin{equation}
	\mu_{T}:=\int_{[0,\,\max\{0,\,\tau^{-1}(T)\}]}\,\delta_{\Phi_T(x,\,0)}\,d\mu_0(x)
		+\int_{[\max\{0,\,\sigma^{-1}(T)\},\,T]}\delta_{\Phi_T(0,\,s)}\,d\nu_0(s)
	\label{candidate}
\end{equation}
whereas, following the characteristics, we construct the trace on the fibre $\{1\}\times [0,\,T]$ as
\begin{equation}
	\nu_1:=\int_{(\max\{0,\,\tau^{-1}(T)\},\,1]}\delta_{\tau(x)}\,d\mu_0(x)
		+\int_{[0,\,\max\{0,\,\sigma^{-1}(T)\})}\delta_{\sigma(s)}\,d\nu_0(s).
	\label{outnu}
\end{equation}
We incidentally notice that the first term at the right-hand side of~\eqref{candidate} is the push-forward of $\mu_0$ by the flow map $\Phi_T$ then restricted to $x\in [0,\,1]$ while the second term at the right-hand side of~\eqref{outnu} is the push-forward of $\nu_0$ by the mapping $\sigma$ then restricted to $t\in [0,\,T]$.

\begin{figure}[!t]
\centering
\includegraphics[width=\textwidth]{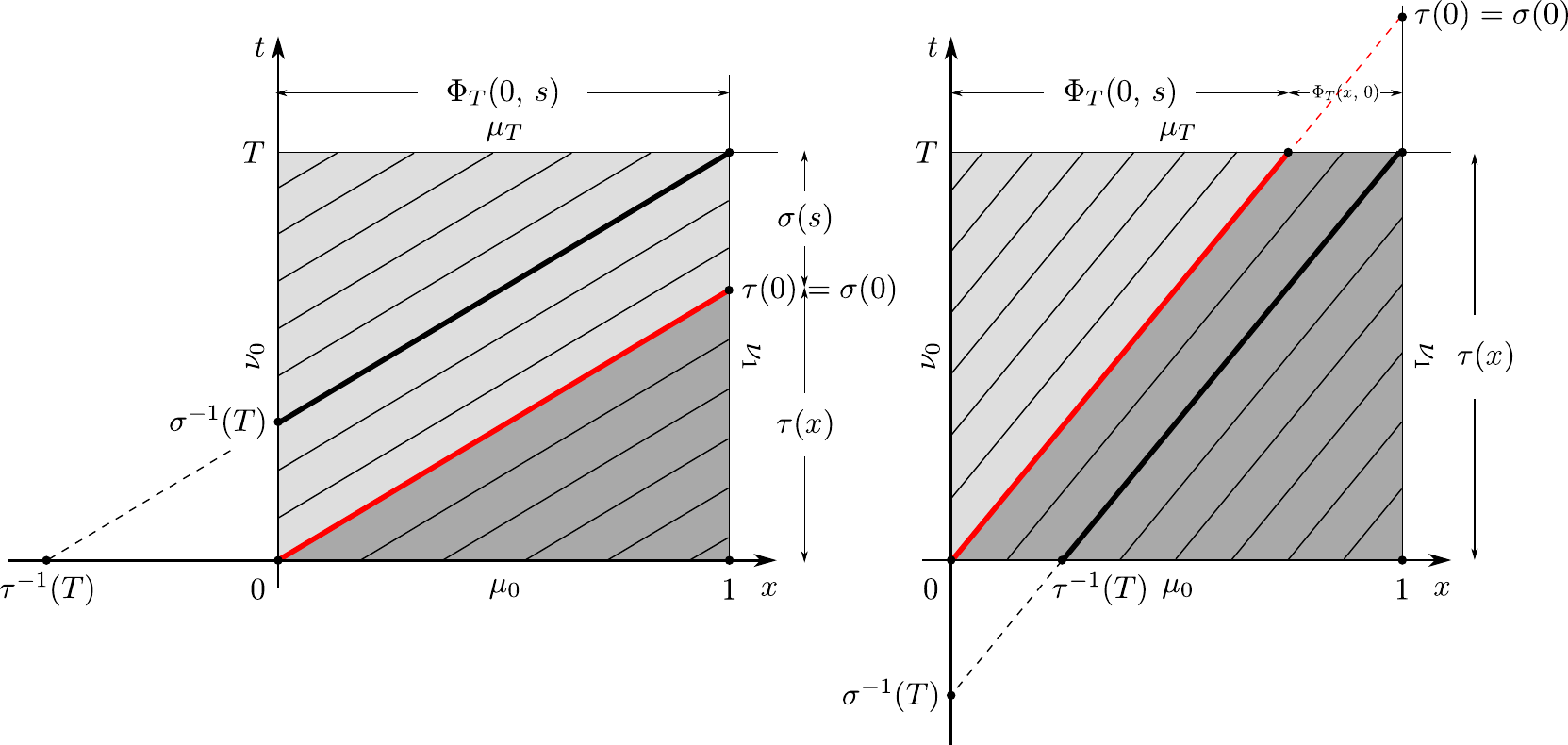}
\caption{Sketch of the characteristics of problem~\eqref{problemsingleQ} in the two cases $\tau(0)=\sigma(0)<T$ (left) and $\tau(0)=\sigma(0)>T$ (right). For pictorial purposes we imagine a constant velocity field, so that the characteristics are straight lines in the space-time.}
\label{fig:characteristics}
\end{figure}

The relationship between these traces and the transport of $\mu_0$, $\nu_0$ inside $Q$ is rigorously stated by the following theorem, which represents our main result on problem~\eqref{problemsingleQ}:
\begin{theorem} \label{existence}
Given $\mu_0\in\cM^+([0,\,1]\times\{0\})$, $\nu_0\in\cM^+(\{0\}\times[0,\,T])$, the measure $\mu\llcorner Q\in\cM^+(\bar{Q})$, $\mu\in\cM^+(\R^+_0\times\R^+_0)$ being the solution to~\eqref{problemsingle}, is the unique measure which satisfies the balance
\begin{equation}
	\dual{\mu\llcorner Q}{\partial_t\varphi+v(x)\partial_x\varphi}=\dual{\mu_T-\mu_0}{\varphi}+\dual{\nu_1-\nu_0}{\varphi},
		\quad \forall\,\varphi\in C^1(\bar{Q}),
	\label{psexit}
\end{equation}
where $\mu_T\in\cM^+([0,\,1]\times\{T\})$, $\nu_1\in\cM^+(\{1\}\times [0,\,T])$ are the traces defined in~\eqref{candidate},~\eqref{outnu}, respectively.

Moreover, for $\mu^{k}_0\in\cM^+([0,\,1]\times\{0\})$, $\nu_0^{k}\in\cM^+(\{0\}\times [0,\,T])$, $k=1,\,2$, there exists a constant $C=C(T)>0$ such that
\begin{equation}
	\normBL{\mu^2_T-\mu^1_T}+\normBL{\nu^2_1-\nu^1_1}\leq C\left(\normBL{\mu^2_0-\mu^1_0}+\normBL{\nu^2_0-\nu^1_0}\right).
	\label{dependence}
\end{equation}
\end{theorem}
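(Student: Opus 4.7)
I would take $\varphi\in C^1(\bar Q)$ and substitute the representation formula~\eqref{solcand1} into the left-hand side of~\eqref{psexit}, intersected with $Q$. Each characteristic issued from $(\xi,0)$, $\xi\in[0,1]$, lives inside $Q$ for $t\in[0,\min\{\tau(\xi),T\}]$; each characteristic issued from $(0,s)$, $s\in[0,T]$, lives inside $Q$ for $t\in[s,\min\{\sigma(s),T\}]$. By the chain rule $\partial_t\varphi(\Phi_t,t)+v(\Phi_t)\partial_x\varphi(\Phi_t,t)=\frac{d}{dt}\varphi(\Phi_t,t)$ combined with~\eqref{eq:char1},~\eqref{eq:char2} and the fundamental theorem of calculus, each inner time integral collapses to a difference of boundary values of $\varphi$. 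Splitting the $\mu_0$-integral according to whether $\tau(\xi)\lessgtr T$ (i.e.\ $\xi\lessgtr\max\{0,\tau^{-1}(T)\}$) and the $\nu_0$-integral according to whether $\sigma(s)\lessgtr T$, the terms evaluated at $t=0$ and $x=0$ recombine into $-\dual{\mu_0}{\varphi}-\dual{\nu_0}{\varphi}$; those evaluated at $t=T$ assemble, by the very definition~\eqref{candidate}, into $\dual{\mu_T}{\varphi}$; and those evaluated at $x=1$ assemble, by the definition~\eqref{outnu}, into $\dual{\nu_1}{\varphi}$, which is~\eqref{psexit}.

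\textbf{Uniqueness.} If $\tilde\mu\in\cM^+(\bar Q)$ satisfies the same balance, then $\eta:=\mu\llcorner Q-\tilde\mu$ obeys $\dual{\eta}{\partial_t\varphi+v\partial_x\varphi}=0$ for every $\varphi\in C^1(\bar Q)$. Given $\psi\in C^1_c(Q)$, I would solve the adjoint transport equation $\partial_t\varphi+v(x)\partial_x\varphi=\psi$ backwards along characteristics with terminal condition $\varphi(\cdot,T)=0$ and outflow condition $\varphi(1,\cdot)=0$; the resulting $\varphi$ lies in $C^1(\bar Q)$ because the flow is smooth and $\psi$ is compactly supported away from $\partial Q$. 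Testing then yields $\dual{\eta}{\psi}=0$ for every such $\psi$, hence $\eta\equiv 0$ on $\bar Q$.

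\textbf{Continuous dependence.} By linearity of~\eqref{candidate} and~\eqref{outnu} in $(\mu_0,\nu_0)$, writing $a:=\max\{0,\tau^{-1}(T)\}$ and $b:=\max\{0,\sigma^{-1}(T)\}$, for every $\varphi\in BL([0,1])$ with $\norm{\varphi}_{BL}\leq 1$,
\[
\dual{\mu_T^2-\mu_T^1}{\varphi}=\int_{[0,a]}\varphi(\Phi_T(\xi,0))\,d(\mu_0^2-\mu_0^1)(\xi)+\int_{[b,T]}\varphi(\Phi_T(0,s))\,d(\nu_0^2-\nu_0^1)(s),
\]
and an analogous identity holds for $\dual{\nu_1^2-\nu_1^1}{\varphi}$ with $\tau,\sigma$ replacing $\Phi_T$. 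Grönwall applied to~\eqref{eq:char1},~\eqref{eq:char2} gives that $\xi\mapsto\Phi_T(\xi,0)$ and $s\mapsto\Phi_T(0,s)$ are Lipschitz with constant at most $e^{LT}$, where $L$ is the Lipschitz constant of $v$, while $\tau$ and $\sigma$ are Lipschitz with constants $1/\min v$ and $1$ since $v$ is bounded below by a positive constant on $[0,1]$. The main delicate point is that the integrations occur over proper sub-intervals, so the $BL$-dual norm is not monotone under restriction; this is circumvented by extending $\xi\mapsto\varphi(\Phi_T(\xi,0))$ to all of $[0,1]$ by the constant $\varphi(1)$ on $(a,1]$ (and similarly the $s$-integrand on $[0,b)$), which produces a $BL$ function on the full interval with norm controlled by $\norm{\varphi}_{BL}$ thanks to the matching $\Phi_T(a,0)=1$ that rules out any jump at the cutoff. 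Taking the supremum over admissible $\varphi$ then yields~\eqref{dependence} with $C(T)$ of order $e^{LT}+1/\min v$.
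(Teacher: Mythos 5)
Your derivation of the balance identity~\eqref{psexit} is essentially the paper's: substitute~\eqref{solcand1}, use the chain rule along characteristics together with~\eqref{eq:char1},~\eqref{eq:char2}, switch the order of integration, and split each data integral according to whether the characteristic exits $Q$ through $t=T$ or through $x=1$; the paper merely organises the same computation by first decomposing, by linearity, into the sub-problems $\nu_0=0$ and $\mu_0=0$. Your uniqueness argument is a genuinely different route: the paper only infers from the stability estimate that the traces~\eqref{candidate},~\eqref{outnu} are uniquely determined by the data, whereas you solve the adjoint problem $\partial_t\varphi+v\partial_x\varphi=\psi$ backwards along characteristics; this is cleaner and closer to the uniqueness claim as stated (modulo the small point that testing against $\psi\in C^1_c(Q)$ identifies $\eta$ only on the open set $Q$, not on $\bar Q$).

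The gap is in the continuous dependence estimate: your extension trick does not close the difficulty you correctly identified. With your $a=\max\{0,\tau^{-1}(T)\}$ and $g(\xi):=\varphi(\Phi_T(\xi,0),T)$ on $[0,a]$ extended by the constant $\varphi(1,T)$ on $(a,1]$, one has
\[
\int_{[0,a]}\varphi(\Phi_T(\xi,0),T)\,d(\mu_0^2-\mu_0^1)(\xi)=\int_{[0,1]}g\,d(\mu_0^2-\mu_0^1)-\varphi(1,T)\,(\mu_0^2-\mu_0^1)\bigl((a,1]\bigr),
\]
and while the first term is bounded by $\norm{g}_{BL}\,\normBL{\mu_0^2-\mu_0^1}$, the second is the pairing of $\mu_0^2-\mu_0^1$ with the discontinuous function $\chi_{(a,1]}$, which is \emph{not} controlled by $\normBL{\mu_0^2-\mu_0^1}$. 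The matching $\Phi_T(a,0)=1$, $\tau(a)=T$ only helps if $\mu_T$ and $\nu_1$ are tested against restrictions of one and the same $\varphi\in BL(\bar Q)$, in which case the combined integrand $\varphi(\Phi_T(\xi,0),T)\chi_{[0,a]}(\xi)+\varphi(1,\tau(\xi))\chi_{(a,1]}(\xi)$ is continuous at $\xi=a$ and genuinely $BL$ on $[0,1]$. When the two dual norms are taken separately, as in~\eqref{dependence}, the jump reappears: taking $T<\tau(0)$, $\mu_0^1=\delta_a$, $\mu_0^2=\delta_{a+\epsilon}$ and $\nu_0^1=\nu_0^2=0$ gives $\normBL{\mu_0^2-\mu_0^1}\leq\epsilon$ but $\mu_T^1=\delta_1$, $\mu_T^2=0$, $\nu_1^1=0$, $\nu_1^2=\delta_{\tau(a+\epsilon)}$, so the left-hand side of~\eqref{dependence} equals $2$. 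The estimate is therefore only salvageable for the glued boundary measure $\mu_T+\nu_1$ on $([0,1]\times\{T\})\cup(\{1\}\times[0,T])$, not for the two traces separately. (For what it is worth, the paper's own proof passes through the bound $\abs{\mu_0^2-\mu_0^1}(A)\leq C\normBL{\mu_0^2-\mu_0^1}$ on subintervals $A$, which suffers from the same defect.)
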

\begin{proof}
See Appendix~\ref{appendix}.
\end{proof}

We also  give a result about the dependence on time.

\begin{theorem} \label{theo:time_dep}
Given $\mu_0\in\cM^+([0,\,1]\times\{0\})$, $\nu_0\in\cM^+(\{0\}\times [0,\,T])$, there exists a constant $C=C(T)>0$ such that
\begin{equation}
	\normBL{\mu_t-\mu_{t'}}+\normBL{\nu_1\llcorner [0,\,t]-\nu_1\llcorner [0,\,t']}\leq C\abs{t-t'}+\nu_0([t',\,t])
	\label{notcont}
\end{equation}
for all $t',\,t\in [0,\,T]$ with $t'<t$.
\end{theorem}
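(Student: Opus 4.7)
The plan is to exploit the explicit representation formulas \eqref{candidate}--\eqref{outnu} together with the elementary fact that, for a positive measure $\rho$, one has $\normBL{\rho}=\rho(\cT)$.

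For the first summand on the left-hand side of \eqref{notcont}, I would pair $\mu_t-\mu_{t'}$ with a test function $\varphi\in BL([0,1])$, $\norm{\varphi}_{BL}\leq 1$, and insert the representation \eqref{candidate}. Each of the two integrals then splits into a \emph{transport} piece, on the common part of the integration domain, and a \emph{boundary} piece, on the symmetric difference of the two integration ranges. On the transport piece the bound $\abs{\Phi_t(x,0)-\Phi_{t'}(x,0)}\leq\vmax\abs{t-t'}$ (and its analogue for $\Phi_t(0,s)$) combines with $\abs{\varphi}_L\leq 1$ and the finiteness of $\mu_0([0,1])$ and $\nu_0([0,T])$ to yield a contribution of order $C_1(T)\abs{t-t'}$. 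The boundary piece accounts for the fresh $\nu_0$-mass entering at $x=0$ during $(t',t]$, bounded by $\nu_0((t',t])$, and for the mass leaving through $x=1$ in the same window, which by \eqref{outnu} is precisely $\nu_1((t',t])$.

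For the second summand, observe that $\nu_1\llcorner[0,t]-\nu_1\llcorner[0,t']=\nu_1\llcorner(t',t]$ is itself a positive measure on $[0,T]$, so its $BL^*$-norm equals the total mass $\nu_1((t',t])$. Adding the two bounds thus reduces the proof to controlling $\nu_1((t',t])$ by the right-hand side of \eqref{notcont}. Using \eqref{outnu} together with the autonomy of $v$ (which gives $\sigma(s)=\tau(0)+s$, hence $\sigma^{-1}(t)-\sigma^{-1}(t')=t-t'$) and the Lipschitz estimate $\abs{\tau^{-1}(t)-\tau^{-1}(t')}\leq\vmax\abs{t-t'}$ (a consequence of $\tau'(x)=-1/v(x)$), one finds that $\nu_1((t',t])=\mu_0([\tau^{-1}(t),\tau^{-1}(t')])+\nu_0([t'-\tau(0),t-\tau(0)))$, where both integration intervals have length $O(\abs{t-t'})$.

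The step I expect to be the main obstacle is precisely the control of this exit-mass $\nu_1((t',t])$, which in general need not itself be $O(\abs{t-t'})$. The bound \eqref{notcont} suggests that the $\nu_0$-portion, supported in the time-shifted window $[t'-\tau(0),t-\tau(0))$, is absorbed into the $\nu_0([t',t])$ term on the right-hand side via the time-translation $s\mapsto\sigma(s)=s+\tau(0)$, while the $\mu_0$-portion is absorbed into $C(T)\abs{t-t'}$ through the Lipschitz regularity of $\tau^{-1}$. Making these absorptions rigorous — together with a case analysis for the edge situations in which $t$ or $t'$ lies below $\tau(0)$ and the $\max\{0,\cdot\}$ in \eqref{candidate}--\eqref{outnu} truncates the integration domains — is the technical core of the argument, and would be carried out in the same spirit as the proof of Theorem~\ref{existence}.
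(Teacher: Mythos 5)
Your reduction of the second summand to the total exit mass, $\normBL{\nu_1\llcorner[0,\,t]-\nu_1\llcorner[0,\,t']}=\nu_1((t',\,t])$, is correct, and so is your decomposition of $\dual{\mu_t-\mu_{t'}}{\varphi}$ into a transport piece of size $O(\abs{t-t'})$, an entering piece of size $\nu_0((t',\,t])$ and an exiting piece of size $\nu_1((t',\,t])$. The gap is exactly where you locate it, but the two ``absorptions'' you propose cannot be made rigorous, because they are false. By \eqref{outnu}, $\nu_1((t',\,t])=\mu_0([\tau^{-1}(t),\,\tau^{-1}(t')))+\nu_0((\sigma^{-1}(t'),\,\sigma^{-1}(t)])$. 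The first term is the $\mu_0$-measure of an interval of length $O(\abs{t-t'})$, which for a general positive measure (e.g. $\mu_0=\delta_{x^*}$ with $t'<\tau(x^*)\leq t$) remains equal to $1$ however small $t-t'$ is: the Lipschitz regularity of $\tau^{-1}$ controls the length of the interval, not its $\mu_0$-measure. The second term is $\nu_0((t'-\tau(0),\,t-\tau(0)])$, the mass of a time-shifted window, which is not comparable to $\nu_0([t',\,t])$. Hence $\nu_1((t',\,t])$ simply is not bounded by $C\abs{t-t'}+\nu_0([t',\,t])$, and any strategy that estimates the two norms separately and then controls the exit mass on its own must fail.

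What the paper does instead is a cancellation rather than an absorption: it tests both differences against the \emph{same} $\varphi\in BL(\bar{Q})$ and combines the pairings. The mass that leaves $\mu_{t'}$ through $x=1$ during $(t',\,t]$ reappears in $\nu_1\llcorner[0,\,t]-\nu_1\llcorner[0,\,t']$, so the exit terms pair up into $\int_{[\tau^{-1}(t),\,\tau^{-1}(t'))}\bigl(\varphi(1,\,\tau(x))-\varphi(\Phi_{t'}(x,\,0),\,t')\bigr)\,d\mu_0(x)$ plus the analogous $\nu_0$-integral; since $\abs{1-\Phi_{t'}(x,\,0)}\leq\vmax\abs{t-t'}$ and $\abs{\tau(x)-t'}\leq\abs{t-t'}$, the integrand is $O(\abs{t-t'})$ uniformly, so the (possibly large) exiting mass only ever enters multiplied by $\abs{t-t'}$. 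Note that this argument genuinely bounds $\abs{\dual{\mu_t-\mu_{t'}}{\varphi}+\dual{\nu_1\llcorner[0,\,t]-\nu_1\llcorner[0,\,t']}{\varphi}}$, i.e. the $BL^*$-norm of the sum of the two measures regarded as measures on $\bar{Q}$; for the sum of the two separate norms, as literally written in \eqref{notcont}, the example $\mu_0=\delta_{x^*}$, $\nu_0=0$ gives a left-hand side equal to $2$ for every $t'<\tau(x^*)\leq t$, so the separate-norms reading is unattainable and your route was, in that sense, blocked from the start.
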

\begin{proof}
See Appendix~\ref{appendix}.
\end{proof}

\begin{remark}
Theorem~\ref{theo:time_dep} states virtually that the traces $\mu_t$ and $\nu_1\llcorner [0,\,t]$ of $\mu\llcorner Q$ are Lipschitz continuous in time, a part from the presence of the term $\nu_0([t',\,t])$ in the estimate~\eqref{notcont}.

If the boundary datum $\nu_0$ is absolutely continuous with respect to the Lebesgue measure in the interval $[t',\,t]$ then for $t\to t'$ we get actually $\normBL{\mu_t-\mu_{t'}}+\normBL{\nu_1\llcorner [0,\,t]-\nu_1\llcorner [0,\,t']}\to 0$. If instead $\nu_0$ contains singularities in $[t',\,t]$ then the distances $\normBL{\mu_t-\mu_{t'}}$, $\normBL{\nu_1\llcorner [0,\,t]-\nu_1\llcorner [0,\,t']}$ between two traces on horizontal and vertical fibres are in general not proportional to the time gap $\abs{t-t'}$.

In the applications, a Lebesgue-absolutely continuous $\nu_0$ corresponds to a \emph{macroscopic} inflow mass provided with density. A Lebesgue-singular $\nu_0$ corresponds instead to \emph{microscopic} point masses flowing from the boundary $x=0$ during the time interval $[t',\,t]$ and then propagating as singularities across $Q$.
\end{remark}

\section{The transport equation on a network}
\label{sec:network}
In this section we go back to the study of problem~\eqref{problemontheroad}. In order to make the notation consistent with the one introduced in Section~\ref{sec:interval}, we set
$$ \nu_0^j:=\mu^{j}_{V_i=\pi_j(0)}, \qquad \nu_1^j:=\mu^j_{V_i=\pi_j(1)} $$
and we rewrite  \eqref{problemontheroad} as
\begin{equation}
	\begin{cases}
		\partial_t\mu^{j}+\partial_x(v_j(x)\mu^j)=0 & x\in E_j,\,t\in (0,\,T],\,j\in J \\[2mm]
		\mu^j_{t=0}=\mu_0^j & x\in E_j,\,j\in J \\
		\nu_0^j=
			\begin{cases}
				\sum\limits_{k\,:\,V_i=\pi_k(1)}p^i_{kj}(t)\nu_1^k & \text{if } i\in\cI \\[3mm]
				p^i_j(t)\varsigma^i & \text{if } i\in\cS.
			\end{cases}
	\end{cases}
	\label{problemonnet}
\end{equation}

Let $\varphi\in C^1(\Gamma\times [0,\,T])$. Given $\mu_0^j\in\cM^+(E_j\times\{0\})$, $\nu_0^j\in\cM^+(\{0\}\times [0,\,T])$, owing to Theorem~\ref{existence} there exists $\mu^j\in\cM^+(E_j\times [0,\,T])$ such that
\begin{equation}
	\dual{\mu^j}{\partial_t\varphi+v_j(x)\partial_x\varphi}=\dual{\mu^j_T-\mu^j_0}{\varphi}+\dual{\nu^j_1-\nu^j_0}{\varphi}
	\label{eq:weak.on.arc}
\end{equation}
for every $j\in J$. Similarly to~\eqref{candidate},~\eqref{outnu}, the traces $\mu^j_T$, $\nu^j_1$ are
\begin{align}
    \mu^j_T &= \int_{[0,\,\max\{0,\,\tau_j^{-1}(T)\}]}\delta_{\Phi^j_T(x,\,0)}\,d\mu_0^j(x)
    		+\int_{[\max\{0,\,\sigma_j^{-1}(T)\},\,T]}\delta_{\Phi^j_T(0,\,s)}\,d\nu_0^j(s) \label{mu_j} \\
	\nu^j_1 &= \int_{(\max\{0,\,\tau_j^{-1}(T)\},\,1]}\delta_{\tau_j(x)}\,d\mu^j_0(x)
		+\int_{[0,\,\max\{0,\,\sigma_j^{-1}(T)\})}\delta_{\sigma_j(s)}\,d\nu^j_0(s), \label{nu_j}
\end{align}
where the flow maps $\Phi^j_t(x,\,0)$ and $\Phi^j_t(0,\,s)$ are defined like in~\eqref{eq:char1},~\eqref{eq:char2}, respectively, using the velocity field $v_j(x)$ on the arc $E_j$, $j\in J$. Likewise, $\tau_j$ and $\sigma_j$ are defined like in~\eqref{exitime},~\eqref{exitime2}.

Summing~\eqref{eq:weak.on.arc} over $j$ and recalling~\eqref{pairing} we deduce
\begin{equation}
	\dual{\mu}{\partial_t\varphi+v(x)\partial_x\varphi}=
		\dual{\mu_T-\mu_0}{\varphi}+\sum_{j\in J}\dual{\nu_1^j-\nu_0^j}{\varphi},
	\label{trdstep}
\end{equation}
where
\begin{equation}
	\mu_0=\sum_{j\in J}\mu_0^j, \qquad
		\mu_T=\sum_{j\in J}\mu^{j}_T.
	\label{eq:mu0T.network}
\end{equation}
In particular, the last term at the right-hand side in~\eqref{trdstep} can be rewritten in more detail by summing on the vertexes of the network:
\begin{align*}
	\sum_{j\in J}\dual{\nu_1^j-\nu_0^j}{\varphi} &=
		\sum_{i\in I}\left(\sum_{j\,:\,V_i=\pi_j(1)}\dual{\nu^j_1}{\varphi}
			-\sum_{j\,:\,V_i=\pi_j(0)}\dual{\nu^j_0}{\varphi}\right) \\
	&= \sum_{i\in\cI}\left(\sum_{j\,:\,V_i=\pi_j(1)}\dual{\nu^j_1}{\varphi}
		-\sum_{j\,:\,V_i=\pi_j(0)}\dual{\nu^j_0}{\varphi}\right) \\
	&\phantom{=} +\sum_{i\in\cW}\sum_{j\,:\,V_i=\pi_j(1)}\dual{\nu^j_1}{\varphi}
		-\sum_{i\in\cS}\sum_{j\,:\,V_i=\pi_j(0)}\dual{\nu^j_0}{\varphi}.
\end{align*}

For an internal vertex $V_i$, $i\in\cI$, using the corresponding boundary condition prescribed in~\eqref{problemonnet} we obtain:
\begin{align*}
	\sum_{j\,:\,V_i=\pi_j(1)}\dual{\nu^j_1}{\varphi}
		-\sum_{j\,:\,V_i=\pi_j(0)}\dual{\nu^j_0}{\varphi} &=
			\sum_{j\,:\,V_i=\pi_j(1)}\dual{\nu^j_1}{\varphi}
				-\sum_{j\,:\,V_i=\pi_j(0)}\dual{\sum_{k\,:\,V_i=\pi_k(1)}p^i_{kj}(t)\nu^k_1}{\varphi} \\
	&= \sum_{j\,:\,V_i=\pi_j(1)}\dual{\nu^j_1}{\varphi}
		-\sum_{k\,:\,V_i=\pi_k(1)}\dual{\sum_{j\,:\,V_i=\pi_j(0)}p^i_{kj}(t)\nu^k_1}{\varphi}
\intertext{whence, taking~\eqref{eq:pijk.sum.1} into account in the second term at the right-hand side,}
	&= \sum_{j\,:\,V_i=\pi_j(1)}\dual{\nu^j_1}{\varphi}
		-\sum_{k\,:\,V_i=\pi_k(1)}\dual{\nu^k_1}{\varphi} \\
	&= 0.
\end{align*}
This is the conservation of the mass through the internal vertexes of the network.

For a source vertex $V_i$, $i\in\cS$, we use the corresponding boundary condition prescribed in~\eqref{problemonnet} to find:
\begin{align*}
	\sum_{i\in\cS}\sum_{j\,:\,V_i=\pi_j(0)}\dual{\nu^j_0}{\varphi} &=
		\sum_{i\in\cS}\sum_{j\,:\,V_i=\pi_j(0)}\dual{p^i_j(t)\varsigma^i}{\varphi} \\
	&= \sum_{i\in\cS}\dual{\left(\sum_{j\,:\,V_i=\pi_j(0)}p^i_j(t)\right)\varsigma^i}{\varphi}
\intertext{whence, in view of~\eqref{eq:pij.sum.1},}
	&= \sum_{i\in\cS}\dual{\varsigma^i}{\varphi}=\dual{\varsigma}{\varphi}
\end{align*}
where we have defined the measure $\varsigma:=\sum_{i\in\cS}\varsigma^i\in\cM^+(\cup_{i\in\cS}\{V_i\}\times [0,\,T])$. This is the total mass flowing into the network from the source vertexes up to the time $T$.

Finally, for a well vertex $V_i$, $i\in\cW$, we define
\begin{equation}
	\omega^i:=\sum_{j\,:\,V_i=\pi_j(1)}\nu^j_1\in\cM^+(\{V_i\}\times [0,\,T]), \qquad
		\omega:=\sum_{i\in\cW}\omega^i\in\cM^+(\cup_{i\in\cW}\{V_i\}\times [0,\,T]),
	\label{eq:omega}
\end{equation}
which represents the total mass flowing out of the network up to the time $T$.

Equation~\eqref{trdstep} takes then the form
\begin{equation}
	\dual{\mu}{\partial_t\varphi+v(x)\partial_x\varphi}=
		\dual{\mu_T-\mu_0}{\varphi}+\dual{\omega-\varsigma}{\varphi},
			\qquad \forall\,\varphi\in C^1(\Gamma\times [0,\,T]),
	\label{weakcond3}
\end{equation}
thereby expressing the counterpart of~\eqref{psexit} on the network.

\medskip

Using the formulation just obtained, we are in a position to establish the well-posedness of the transport problem over networks.
\begin{theorem} \label{thmgen}
Given $\mu_0\in\cM^+(\Gamma\times\{0\})$ and $\varsigma\in\cM^+(\cup_{i\in\cS}\{V_i\}\times [0,\,T])$, there exists a unique measure $\mu\in\cM^+(\Gamma\times [0,\,T])$ which satisfies the balance~\eqref{weakcond3} with $\mu_T\in\cM^+(\Gamma\times\{T\})$ defined in~\eqref{mu_j}-\eqref{eq:mu0T.network} and $\omega\in\cM^+(\cup_{i\in\cW}\{V_i\}\times [0,\,T])$ defined in~\eqref{nu_j}-\eqref{eq:omega}.

Moreover, for $\mu_{0,k}\in\cM^+(\Gamma\times\{0\})$, $\varsigma_k\in\cM^+(\cup_{i\in\cS}\{V_i\}\times [0,\,T])$, $k=1,\,2$, there exists a constant $C=C(T)>0$ such that
\begin{equation}
	\normBL{\mu_{T,2}-\mu_{T,1}}+\normBL{\omega_2-\omega_1}\leq C\left(\normBL{\mu_{0,2}-\mu_{0,1}}+\normBL{\varsigma_2-\varsigma_1}\right).
	\label{dependenceontheroad}
\end{equation}
\end{theorem}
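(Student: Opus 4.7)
The plan is to reduce the network problem to a sequence of single-arc problems, exploiting the strict positivity of the velocities to obtain a finite time-stepping scheme. Because each $v_j$ is bounded below by a positive constant, the minimum arc-crossing time
$$ \tau^\ast := \min_{j\in J}\tau_j(0) > 0 $$
is strictly positive. I fix any $\Delta t \in (0,\tau^\ast)$ and observe that on the short interval $[0,\Delta t]$ the second integral in formula \eqref{nu_j} is supported on the empty set: indeed $\sigma_j(s)=\tau_j(0)+s \geq \tau^\ast > \Delta t$ for every $s \geq 0$ and every $j \in J$. Consequently the outflow $\nu_1^j\llcorner[0,\Delta t]$ depends only on the initial datum $\mu_0^j$ and not on the yet-unknown inflow $\nu_0^j$. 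This observation decouples the boundary problem on the first time slab and is the key to the whole construction.

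\textbf{Existence.} On $[0,\Delta t]$ I would first compute $\nu_1^j\llcorner[0,\Delta t]$ on every arc from $\mu_0^j$ via the first integral in \eqref{nu_j}; then define $\nu_0^j\llcorner[0,\Delta t]$ through the boundary relations in \eqref{problemonnet}, namely $\nu_0^j = \sum_{k\,:\,V_i=\pi_k(1)} p^i_{kj}(t)\nu_1^k$ when $V_i = \pi_j(0)$ is internal and $\nu_0^j = p^i_j(t)\varsigma^i$ when $V_i$ is a source; finally invoke Theorem \ref{existence} on each arc to obtain $\mu^j$ on $E_j\times[0,\Delta t]$ together with its trace $\mu^j_{\Delta t}$. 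Iterating the scheme on the slabs $[k\Delta t,(k+1)\Delta t]$, using $\mu^j_{k\Delta t}$ as the new initial datum on the $j$-th arc, produces after $N=\lceil T/\Delta t\rceil$ steps a measure $\mu = \sum_{j\in J}\mu^j \in \cM^+(\Gamma\times[0,T])$. The balance \eqref{weakcond3} is then recovered by summing \eqref{eq:weak.on.arc} over $j$ and collapsing the vertex sums via the distribution identities \eqref{eq:pijk.sum.1}, \eqref{eq:pij.sum.1}, exactly as already done in the computation preceding the statement.

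\textbf{Uniqueness and continuous dependence.} For uniqueness, any solution $\tilde\mu$ to \eqref{weakcond3}, restricted to an arc, satisfies \eqref{eq:weak.on.arc} with traces $\tilde\nu_0^j$, $\tilde\nu_1^j$ that are forced by the same boundary rules; running the above scheme slab by slab and applying the uniqueness half of Theorem \ref{existence} on each slab would give $\tilde\mu = \mu$. For the stability estimate \eqref{dependenceontheroad}, I would apply \eqref{dependence} arc by arc on the first slab, and couple the resulting bounds through the elementary inequality
$$ \normBL{p^i_{kj}(\nu_1^{k,2}-\nu_1^{k,1})} \leq \|p^i_{kj}\|_{BL}\,\normBL{\nu_1^{k,2}-\nu_1^{k,1}}, $$
valid because the distribution coefficients are bounded by $1$ and (implicitly) Lipschitz in $t$. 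Summing over arcs, using that each source contribution is controlled by $\normBL{\varsigma_2^i-\varsigma_1^i}$, and iterating over the $N$ slabs produces a constant $C(T)$ independent of the data.

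The main obstacle will be the bookkeeping of the constant through the iteration: every slab contributes a multiplicative factor from \eqref{dependence}, and the couplings at internal vertexes mix the errors across incoming and outgoing arcs in a way that must be controlled uniformly. I expect the resulting $C(T)$ to grow like a power $\gamma^N$ of the single-slab constant $\gamma$, so the key point is to check that $N=\lceil T/\Delta t\rceil$ depends only on $T$ and on the network parameters $v_j$, $p^i_{kj}$, and not on the data $\mu_0$, $\varsigma$. Once that is verified, the final estimate \eqref{dependenceontheroad} follows and the scheme is complete.
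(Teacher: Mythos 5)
Your proposal is correct, and it uses the same building block as the paper (Theorem~\ref{existence} applied arc by arc, then the vertex sums collapsed via~\eqref{eq:pijk.sum.1} and~\eqref{eq:pij.sum.1}), but it assembles the global solution differently. The paper splits into two cases: when $\cS\ne\emptyset$ it orders the arcs by their combinatorial distance from the source set (the layers $\cE_0,\cE_1,\dots$) and performs an induction over arcs at fixed time horizon $T$; only when $\cS=\emptyset$ does it resort to the time-stepping device you use, choosing $t_0<\min_j\tau_j(0)$ so that on a short time slab every outflow $\nu^j_1$ is determined by the initial datum alone. You apply that time-stepping argument uniformly in both cases, with $\Delta t<\tau^\ast=\min_j\tau_j(0)>0$ (which is indeed positive since $\tau_j(0)\ge 1/\vmax$), and your verification that $\sigma_j(s)=\tau_j(0)+s>\Delta t$ kills the $\nu_0^j$-contribution to $\nu_1^j\llcorner[0,\Delta t]$ is exactly the decoupling the paper uses in its case (ii). What your route buys is robustness: the paper's layer induction tacitly assumes that every arc incoming to a vertex of $\cE_m$ lies in $\cE_{m-1}$, which is delicate for networks with cycles or with source-to-vertex paths of different lengths, whereas the time-slab iteration needs no structural assumption on $\Gamma$ beyond finiteness. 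The price is the bookkeeping you describe, a stability constant of order $\gamma^{N}$ with $N=\lceil T/\Delta t\rceil$ depending only on $T$ and $\vmax$, which is still an admissible $C(T)$. One further point in your favour: you correctly observe that propagating~\eqref{dependence} through an internal vertex requires $\normBL{p^i_{kj}(\nu_1^{k,2}-\nu_1^{k,1})}\le\norm{p^i_{kj}}_{BL}\normBL{\nu_1^{k,2}-\nu_1^{k,1}}$, hence some Lipschitz regularity of the distribution coefficients in $t$; the paper declares the estimate ``an immediate consequence'' of~\eqref{dependence} without addressing this coupling, so your treatment is, if anything, the more careful one. You should just state the $t$-regularity of $p^i_{kj}$ as an explicit hypothesis rather than leaving it implicit.
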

\begin{proof}
We treat separately the cases in which the set of the source vertexes is or is not empty.

\begin{enumerate}[(i)]
\item Assume $\cS\ne\emptyset$. We introduce a partition of the set $\cE=\{E_j\}_{j\in J}$ based on the distance from the source set:
\begin{align*}
    \cE_0 &= \{E_j\,:\,V_i=\pi_j(0) \text{ is a source}\} \\
    \cE_m &= \{E_j\,:\,\exists\,E_k\in\cE_{m-1} \text{ s.t. } V_i=\pi_j(0)=\pi_k(1)\}, \qquad m=1,\,2,\,\dots
\end{align*}
We first apply Theorem~\ref{existence} to the problem defined on each arc in $\cE_{0}$, i.e for each $E_j\in\cE_0$ such that $V_i=\pi_j(0)$, $i\in \cS$, we consider
\begin{equation*}
	\begin{cases}
		\partial_t\mu^j+\partial_x(v_j(x)\mu^j)=0 & \text{in } E_j\times (0,\,T] \\[2mm]
		\mu^j_{t=0}=\mu_0^j\in\cM^+(E_j\times\{0\}) \\[2mm]
		\nu^j_0=p^i_j(t)\varsigma^i\in\cM^+(\{V_i\}\times [0,\,T]).
	\end{cases}
\end{equation*}
Since $\nu^j_0$ is prescribed, we obtain the existence of $\mu^j\in\cM^+(E_j\times [0,\,T])$, $\mu^j_T\in\cM^+(E_j\times\{T\})$ and $\nu^j_1\in\cM^+(\{\pi_j(1)\}\times [0,\,T])$ satisfying the balance~\eqref{psexit}. Next we proceed by induction on $m=1,\,2,\,\dots$ considering the problem on $E_j\in\cE_{m}$ with $V_i=\pi_j(0)$:
\begin{equation*}
	\begin{cases}
		\partial_t\mu^j+\partial_x(v_j(x)\mu^j)=0 & \text{in } E_j\times (0,\,T] \\[2mm]
		\mu^j_{t=0}=\mu_0^j\in\cM^+(E_j\times\{0\}) \\
		\nu^j_0=\sum	\limits_{k=1}^{d_I^i}p^i_{kj}(t)\nu_1^k\in\cM^+(\{V_i\}\times [0,\,T]).
	\end{cases}
\end{equation*}
Since the arcs $E_k$, $k=1,\,\dots,\,d_I^i$, belong to $\cE_{m-1}$, the solution to the transport equation on them is known by the inductive step (using the case $m=0$ as basis), hence the boundary measure $\nu^j_0$ is well defined because so are the outflow measures $\nu_1^k$. Therefore we can apply again Theorem~\ref{existence} to fulfil the balance~\eqref{psexit} on $E_j\in\cE_{m}$.

In this way, after a finite number of steps we build arc by arc the measures $\mu\in\cM^+(\Gamma\times [0,\,T])$, $\mu_T\in\cM^+(\Gamma\times\{T\})$ and $\omega\in\cM^+(\cup_{i\in\cW}\{V_i\}\times [0,\,T])$ which globally satisfy the balance~\eqref{weakcond3}.

\item Assume now $\cS=\emptyset$. Fix an arbitrary internal vertex $V_i$, $i\in\cI$, and choose
$$ t_0<\min_{j\in J\,:\,V_i=\pi_j(1)}\tau_j(0). $$
From~\eqref{nu_j} we see that, up to the time $t_0$, on all the arcs $E_j$ such that $V_i=\pi_j(1)$ the outflow measure $\nu^j_1$ is given by
$$ \nu^j_1=\int_{(\tau_j^{-1}(t_0),\,1]}\delta_{\tau_j(x)}\,d\mu^j_0(x), $$
because $\tau_j^{-1}(t_0)>0$ while $\sigma_j^{-1}(t_0)<0$ (cf. Figure~\ref{fig:characteristics}, left). Hence $\nu^j_1$ depends only on the initial datum $\mu_0^j$ and not on the inflow measure $\nu^j_{0}$.

Let us consider the initial/boundary-value problem~\eqref{problemonnet} for $t\in (0,\,t_0]$ with $V_i$ as source vertex and corresponding source measure
$$ \varsigma^i=\sum_{j\,:\,V_i=\pi_j(1)}\nu_1^j
	=\sum_{j\,:\,V_i=\pi_j(1)}\int_{(\tau_j^{-1}(t_0),\,1]}\delta_{\tau_j(x)}\,d\mu^j_0(x). $$
From the case $\cS\ne\emptyset$ we know that we can construct $\mu\in\cM^+(E_j\times [0,\,t_0])$, $\mu_{t_0}\in\cM(E_j\times\{t_0\})$ and $\omega\in\cM^+(\cup_{i\in\cW}\{V_i\}\times [0,\,t_0])$ which satisfy the balance~\eqref{psexit}. Moreover, the inflow measures $\nu^j_0$ of all the arcs $E_j$ such that $V_i=\pi_j(0)$ coincide with those of the original problem without sources, because they are actually determined only by the initial datum. Hence $\mu$ is also a solution of the original problem in $[0,\,t_0]$. By repeating this argument on the intervals $(t_0,\,2t_0]$, $(2t_0,\,3t_0]$, \dots, with initial data $\mu_{t_0}$, $\mu_{2t_0}$, \dots, after a finite number of steps we obtain the solution of the problem without source in any interval $[0,\,T]$, $T>0$.
\end{enumerate}

Finally, the estimate~\eqref{dependenceontheroad} is in both cases an immediate consequence of the corresponding estimate~\eqref{dependence} holding on each arc.
\end{proof}

\section{Examples of junctions}
\label{sec:examples}
In this section we write explicitly the solution to problem~\eqref{problemonnet} for two typical junctions which occur frequently for instance in traffic flow on road networks. It is worth pointing out that, since in our linear equation the velocity depends only on the space variable but not on the measure $\mu$ itself, the transport model that we are considering may provide an acceptable description of the flow of vehicles at most in the so-called \emph{free flow regime}. In fact, in such a case the number of vehicles is sufficiently small that their speed is almost independent of the presence of other vehicles on the road.

\subsection{The 1-2 junction -- Atomic inflow distribution} \label{sec:1-2_junct.atomic}
Let $\Gamma$ be the road network shown in Figure~\ref{fig:1-2_junction} formed by $3$ arcs, viz. roads, $E_1,\,E_2,\,E_3$ and $4$ vertexes $V_1,\,\dots,\,V_4$ such that $E_1$ connects the source vertex $V_1$ to the internal vertex $V_2$ while $E_2$ and $E_3$ connect the internal vertex $V_2$ to the well vertexes $V_3$ and $V_4$. This gives also the orientation of the arcs. In practice, beyond the junction $V_2$ the road $E_1$ splits in the two roads $E_2$, $E_3$.
\begin{figure}[!t]
\centering
\includegraphics[width=0.6\textwidth]{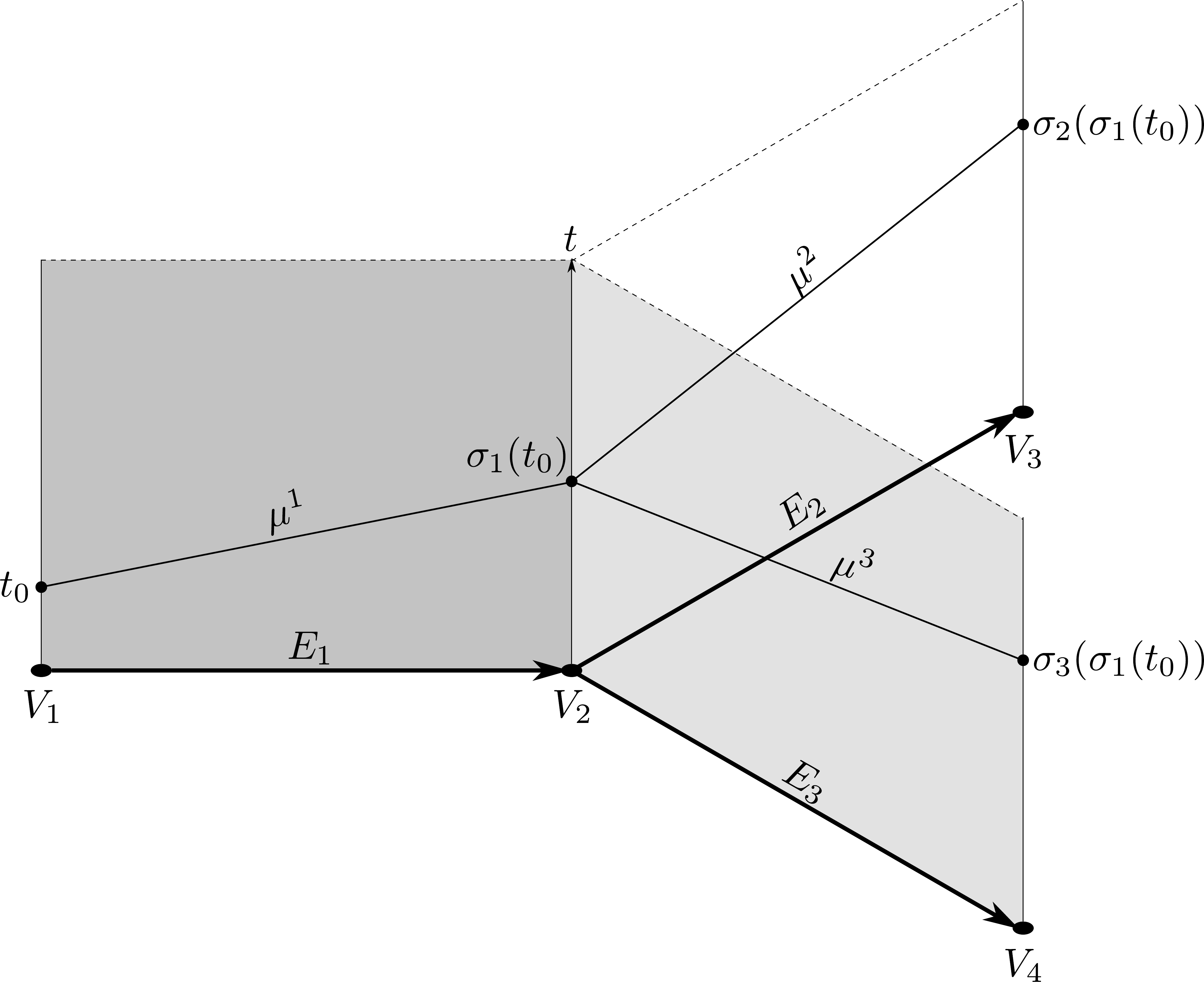}
\caption{The 1-2 junction with a sketch of the characteristics along which the solution to the example of Section~\ref{sec:1-2_junct.atomic} propagates in the space-time of the network.}
\label{fig:1-2_junction}
\end{figure}
We assume that the network is initially empty. At some time $t_0>0$ a \emph{microscopic} vehicle enters the network from the vertex $V_1$ and then travels across it. At the junction $V_2$ we prescribe a flux distribution rule stating that a time-dependent fraction $p=p(t):[0,\,T]\to [0,\,1]$ of the incoming mass flows to the road $E_2$ while the complementary fraction $1-p(t)$ flows to the road $E_3$. Taking $T=+\infty$, the problem can be formalised as:
\begin{equation*}
	\begin{cases}
		\partial_t\mu^j+\partial_x(v_j(x)\mu^j)=0 & x\in E_j,\,t\in\R^+,\,j=1,\,2,\,3 \\
		\mu_0=0 & x\in\Gamma \\
		\nu_0^1=\delta_{t_0} & t\in\R^+_0 \\
		\nu_0^2=p(t)\cdot\nu_1^1 & t\in\R^+_0 \\
		\nu_0^3=(1-p(t))\cdot\nu_1^1 & t\in\R^+_0,
	\end{cases}
\end{equation*}
where the velocity fields $v_j:E_j\to (0,\,\vmax^j]$, $0<\vmax^j<+\infty$, are given Lipschitz continuous functions of $x$.

The solution on each road has the form $\mu^j(dx\,dt)=\mu_t^j(dx)\otimes dt$, where $\mu^j_t$ is the trace of $\mu^j$ on the fibre $E_j\times\{t\}$. Using~\eqref{mu_j},~\eqref{nu_j} we determine explicitly the expression of $\mu^j_t$ for all $t>0$ and that of the outflow masses $\nu_1^j$ on the fibres $\{\pi_j(1)\}\times\R^+_0$ (notice that $\pi_1(1)=V_2$, $\pi_2(1)=V_3$, $\pi_3(1)=V_4$). We find (cf. Figure~\ref{fig:1-2_junction}):
\begin{align*}
	\mu^1_t &= \delta_{\Phi^1_t(0,\,t_0)}\chi_{[t_0,\,\sigma_1(t_0)]}(t), & \nu_1^1 &= \delta_{\sigma_1(t_0)} \\
	\mu^2_t &= p(\sigma_1(t_0))\delta_{\Phi^2_t(0,\,\sigma_1(t_0))}\chi_{[\sigma_1(t_0),\,\sigma_2(\sigma_1(t_0)]}(t),
		& \nu_1^2 &= \omega^3=p(\sigma_1(t_0))\delta_{\sigma_2(\sigma_1(t_0))} \\
	\mu^3_t &= [1-p(\sigma_1(t_0))]\delta_{\Phi^3_t(0,\,\sigma_1(t_0))}\chi_{[\sigma_1(t_0),\,\sigma_3(\sigma_1(t_0))]}(t),
		& \nu_1^3 &= \omega^4=[1-p(\sigma_1(t_0))]\delta_{\sigma_3(\sigma_1(t_0))}.
\end{align*}

Furthermore, using Bochner integrals in the product space $E_j\times\R^+_0$ we can possibly write the solution $\mu^j$ on each road as
\begin{align*}
	\mu^1 &= \int_{t_0}^{\sigma_1(t_0)}\delta_{(\Phi^1_t(0,\,t_0),\,t)}\,dt \\
	\mu^2 &= p(\sigma_1(t_0))\int_{\sigma_1(t_0)}^{\sigma_2(\sigma_1(t_0))}\delta_{(\Phi^2_t(0,\,\sigma_1(t_0)),\,t)}\,dt \\
	\mu^3 &= [1-p(\sigma_1(t_0))]\int_{\sigma_1(t_0)}^{\sigma_3(\sigma_1(t_0))}\delta_{(\Phi^3_t(0,\,\sigma_1(t_0)),\,t)}\,dt.
\end{align*}

\begin{remark}
By carefully inspecting the expressions of $\mu^j_t$, $j=1,\,2,\,3$, we see that the unit-mass Dirac delta prescribed at the source vertex $V_1$ splits in two Dirac deltas beyond the junction $V_2$, cf. also Figure~\ref{fig:1-2_junction}, each of which carries a fraction, $p(\sigma_1(t_0))$ and $1-p(\sigma_1(t_0))$, respectively, of the initial mass.

Unlike the Dirac delta entering the road $E_1$ from $V_1$, the two Dirac deltas propagating in the roads $E_2$, $E_3$ do \emph{not} represent physical microscopic vehicles. Rather, each of them is the same microscopic vehicle coming from the road $E_1$ and the coefficients $p(\sigma_1(t_0))$, $1-p(\sigma_1(t_0))$ have to be understood as the \emph{probabilities} that such a vehicle takes either outgoing road beyond the junction $V_2$.

This approach differs from the one proposed in~\cite{cristiani2016NHM}, which instead assigns a path to each microscopic vehicle through the network in the spirit of the multipath traffic model introduced in~\cite{bretti2014DCDSS,briani2014NHM}.
\end{remark}

\subsection{The 1-2 junction -- Continuous inflow distribution} \label{sec:1-2_junct.abscont}
We now consider the same network as in the previous Section~\ref{sec:1-2_junct.atomic} but we prescribe an inflow measure $\nu_0^1$ which is absolutely continuous with respect to the Lebesgue measure:
$$ \nu^1_0(dt):=\rho(t)\,dt, $$
where $\rho\in L^1(\R^+_0)$ with $\supp{\rho}\subseteq\R^+_0$ is the density of the vehicles entering the network from the vertex $V_1$.

Recalling that the network is initially empty and using~\eqref{mu_j}, we obtain that for each $t>0$ the trace $\mu^1_t$ of the solution $\mu^1$ in the road $E_1$ is
$$ \mu^1_t=\int_{\max\{0,\,\sigma_1^{-1}(t)\}}^t\delta_{\Phi^1_t(0,\,s)}\rho(s)\,ds
	=\int_0^{t-\max\{0,\,\sigma_1^{-1}(t)\}}\delta_{\Phi^1_r(0,\,0)}\rho(t-r)\,dr, $$
where in the last passage we have set $r:=t-s$ after observing from~\eqref{eq:char2} that $\Phi^1_t(0,\,s)=\Phi^1_{t-s}(0,\,0)$ for all $0\leq s\leq t$. Likewise, recalling~\eqref{nu_j} we find that the outflow mass $\nu^1_1$ at the vertex $V_2$ is
$$ \nu^1_1=\int_0^{+\infty}\delta_{\sigma_1(s)}\rho(s)\,ds
	=\int_{\sigma_1(0)}^{+\infty}\delta_r\rho(r-\sigma_1(0))\,dr, $$
where in the second passage we have set $r:=\sigma_1(s)=s+\sigma_1(0)$. In view of the Bochner representation~\eqref{superpos} and considering that $\supp{\rho(\cdot-\sigma_1(0))}\subseteq [\sigma_1(0),\,+\infty)$, we deduce in particular
$$ \nu^1_1(dt)=\rho(t-\sigma_1(0))\,dt. $$

According to our transmission conditions, this mass is distributed to the outgoing roads $E_2$, $E_3$ as
$$ \nu^2_0=p(t)\nu^1_1, \qquad \nu^3_0=(1-p(t))\nu^1_1, $$
which, owing to~\eqref{mu_j}, implies that the traces $\mu^2_t$, $\mu^3_t$ of the solutions $\mu^2$, $\mu^3$ in the outgoing roads are respectively given by
\begin{align*}
	\mu^2_t &= \int_{\max\{0,\,\sigma_2^{-1}(t)\}}^t\delta_{\Phi^2_t(0,\,s)}p(s)\rho(s-\sigma_1(0))\,ds \\
	&= \int_0^{t-\max\{0,\,\sigma_2^{-1}(t)\}}\delta_{\Phi^2_r(0,\,0)}p(t-r)\rho(t-r-\sigma_1(0))\,dr \\
	\intertext{and by}
	\mu^3_t &= \int_{\max\{0,\,\sigma_3^{-1}(t)\}}^t\delta_{\Phi^3_t(0,\,s)}(1-p(s))\rho(s-\sigma_1(0))\,ds \\
	&= \int_0^{t-\max\{0,\,\sigma_3^{-1}(t)\}}\delta_{\Phi^3_r(0,\,0)}(1-p(t-r))\rho(t-r-\sigma_1(0))\,dr.
\end{align*}
It is interesting to note that, since in general the density $\rho$ is split asymmetrically in the roads $E_2$ and $E_3$ (unless $p(t)=\frac{1}{2}$),
the corresponding measure solution, even if possibly continuous inside the arcs of the network, is discontinuous across the vertex $V_2$.

Finally, the outflow masses $\nu^2_1=\omega^3$ and $\nu^3_1=\omega^4$ are recovered from~\eqref{nu_j} as
\begin{align*}
	\nu^2_1=\omega^3 &= \int_0^{+\infty}\delta_{\sigma_2(s)}p(s)\rho(s-\sigma_1(0))\,ds \\
	&= \int_{\sigma_2(0)}^{+\infty}\delta_rp(r-\sigma_2(0))\rho(r-\sigma_1(0)-\sigma_2(0))\,dr
	\intertext{and}
	\nu^3_1=\omega^4 &= \int_0^{+\infty}\delta_{\sigma_3(s)}(1-p(s))\rho(s-\sigma_1(0))\,ds \\
	&= \int_{\sigma_3(0)}^{+\infty}\delta_r(1-p(r-\sigma_3(0)))\rho(r-\sigma_1(0)-\sigma_3(0))\,dr.
\end{align*}
Observing that $\supp{\rho(\cdot-\sigma_1(0)-\sigma_j(0))}\subseteq [\sigma_1(0)+\sigma_j(0),\,+\infty)$ for $j=2,\,3$, from the Bochner representation~\eqref{superpos} of a measure we further deduce
\begin{align*}
	\nu^2_1(dt)=\omega^3(dt) &= p(t-\sigma_2(0))\rho(t-\sigma_1(0)-\sigma_2(0))\,dt \\
	\nu^3_1(dt)=\omega^4(dt) &= (1-p(t-\sigma_3(0))\rho(t-\sigma_1(0)-\sigma_3(0))\,dt.
\end{align*}

\begin{remark}
The transport problem being linear, the case of an inflow measure $\nu^1_0$ carrying both an atomic and a Lebesgue-absolutely continuous part can be addressed by simply superimposing the solutions obtained in Sections~\ref{sec:1-2_junct.atomic} and~\ref{sec:1-2_junct.abscont}.
\end{remark}

\subsection{The 2-1 junction}
\label{sec:2-1_junct}
\begin{figure}[!t]
\centering
\includegraphics[width=0.6\textwidth]{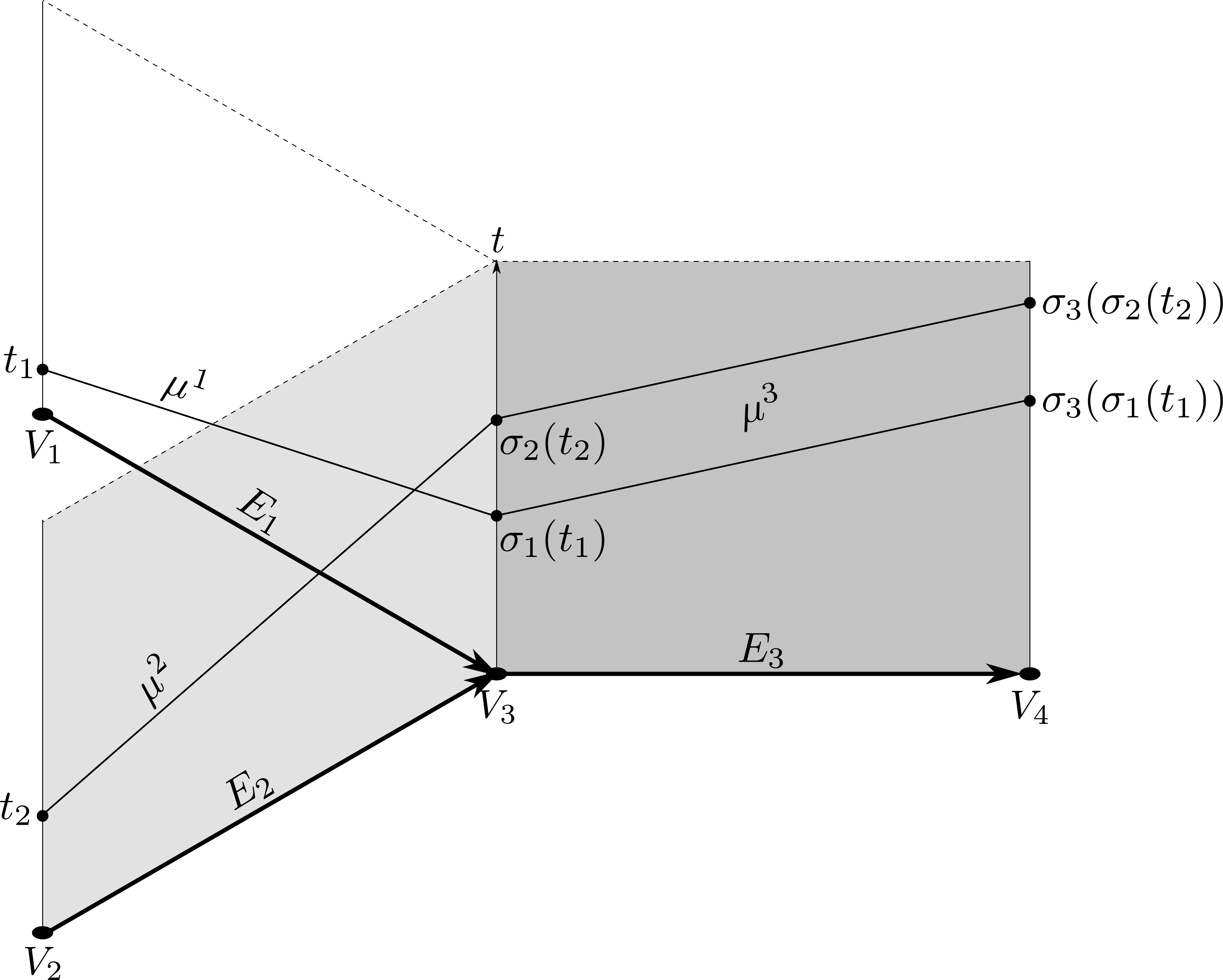}
\caption{The 2-1 junction with a sketch of the characteristics along which the solution to the example of Section~\ref{sec:2-1_junct} propagates in the space-time of the network.}
\label{fig:2-1_junction}
\end{figure}
We consider now the road network $\Gamma$ illustrated in Figure~\ref{fig:2-1_junction} with again $3$ arcs, viz. roads, $E_1,\,E_2,\,E_3$ and $4$ vertexes $V_1,\,\dots,\,V_4$. However, in this case both vertexes $V_1$, $V_2$ are sources and are connected by roads $E_1$, $E_2$ to the internal vertex $V_3$. The latter is finally connected to the well vertex $V_4$ by road $E_3$. In practice, beyond the junction $V_3$ the incoming roads $E_1$, $E_2$ merge into the outgoing road $E_3$.

Like in Sections~\ref{sec:1-2_junct.atomic},~\ref{sec:1-2_junct.abscont}, we assume that the network is initially empty. At two successive time instants $0\leq t_1\leq t_2$ two \emph{microscopic} vehicles enter the network from the sources $V_1$, $V_2$, respectively. Their propagation across the network for $t>0$ is then described by the problem:
\begin{equation*}
	\begin{cases}
		\partial_t\mu^j+\partial_x(v_j(x)\mu^j)=0 & x\in E_j,\,t\in\R^+,\,j=1,\,2,\,3 \\
		\mu_0=0 & x\in\Gamma \\
		\nu_0^1=\delta_{t_1} & t\in\R^+_0 \\
		\nu_0^2=\delta_{t_2} & t\in\R^+_0 \\
		\nu_0^3=\nu_1^1+\nu_1^2 & t\in\R^+_0,
	\end{cases}
\end{equation*}
where the velocity fields $v_j:E_j\to (0,\,\vmax^j]$, $0<\vmax^j<+\infty$, are as usual given Lipschitz continuous functions of $x$. Notice that, for mass conservation purposes, the flux distribution coefficients at the junction $V_3$ are necessarily $p^3_{13}(t)=p^3_{23}(t)=1$ for all $t>0$.

Relying again on~\eqref{mu_j},~\eqref{nu_j} we write explicitly the solution $\mu^j\in\cM^+(E_j\times\R^+_0)$ on each road as well as the outflow measures $\nu_1^j\in\cM^+(\{\pi_j(1)\}\times\R^+_0)$, with $\pi_1(1)=\pi_2(1)=V_3$ and $\pi_3(1)=V_4$. We find (cf. Figure~\ref{fig:2-1_junction}):
\begin{align*}
	\mu^1_t &= \delta_{\Phi^1_t(0,\,t_1)}\chi_{[t_1,\,\sigma_1(t_1)]}(t), & \nu_1^1 &= \delta_{\sigma_1(t_1)} \\
	\mu^2_t &= \delta_{\Phi^2_t(0,\,t_2)}\chi_{[t_2,\,\sigma_2(t_2)]}(t), & \nu_1^2 &= \delta_{\sigma_2(t_2)} \\
	\mu^3_t &= \delta_{\Phi^3_t(0,\,\sigma_1(t_1))}\chi_{[\sigma_1(t_1),\,\sigma_3(\sigma_1(t_1))]}(t) &
		\nu_1^3 &= \omega^4=\delta_{\sigma_3(\sigma_1(t_1))}+\delta_{\sigma_3(\sigma_2(t_2))}, \\
	&\phantom{=} +\delta_{\Phi^3_t(0,\,\sigma_2(t_2))}\chi_{[\sigma_2(t_2),\,\sigma_3(\sigma_2(t_2))]}(t),
\end{align*}
whence, using Bochner integrals in the product spaces $E_j\times\R^+_0$, $j=1,\,2,\,3$,
\begin{align*}
	\mu^1 &= \int_{t_1}^{\sigma_1(t_1)}\delta_{(\Phi^1_t(0,\,t_1),\,t)}\,dt \\
	\mu^2 &= \int_{t_2}^{\sigma_2(t_2)}\delta_{(\Phi^2_t(0,\,t_2),\,t)}\,dt \\
	\mu^3 &= \int_{\sigma_1(t_1)}^{\sigma_3(\sigma_1(t_1))}\delta_{(\Phi^3_t(0,\,\sigma_1(t_1)),\,t)}\,dt
		+\int_{\sigma_2(t_2)}^{\sigma_3(\sigma_2(t_2))}\delta_{(\Phi^3_t(0,\,\sigma_2(t_2)),\,t)}\,dt.
\end{align*}

\appendix

\section{Proofs of the theorems of Section~\ref{sec:interval}}
\label{appendix}
\begin{proof}[Proof of Theorem~\ref{existence}]
We observe that $\mu$ can be obtained, by linearity, as the sum of the solutions of two transport problems with $\nu_0=0$ and $\mu_{0}=0$, respectively.

We begin by considering the case $\nu_0=0$ and assume, without loss of generality, that $T\leq\tau(0)$. Then $\tau^{-1}(T)\geq 0$ whence, recalling~\eqref{candidate},~\eqref{outnu}, we obtain
\begin{equation}
	\mu_T=\int_{[0,\,\tau^{-1}(T)]}\delta_{\Phi_{T}(x,\,0)}\,d\mu_{0}(x), \qquad
		\nu_1=\int_{(\tau^{-1}(T),\,1]}\delta_{\tau(x)}\,d\mu_{0}(x)
	\label{eq:proof.restrictions.1}
\end{equation}
and we have to show that
\begin{equation}
	\dual{\mu\llcorner Q}{\partial_t\varphi+v(x)\partial_x\varphi}=\dual{\mu_T-\mu_0}{\varphi}+\dual{\nu_1}{\varphi},
		\qquad \forall\,\varphi\in C^1(\bar{Q}),
	\label{P1}
\end{equation}
where $\mu$ is the measure~\eqref{solcand1}. Following the characteristics, its restriction to $Q$ writes as
$$ \mu\llcorner Q(dx\,dt)=\underbrace{\int_{[0,\,\tau^{-1}(t)]}\delta_{\Phi_t(\xi,\,0)}(dx)\,d\mu_0(\xi)}_{:=\mu_t\llcorner Q(dx)}\otimes\,dt, $$
thus for $\varphi\in C^1(\bar{Q})$ we discover:
\begin{align*}
	\dual{\mu\llcorner Q}{\partial_t\varphi+v(x)\partial_x\varphi}
		&= \int_0^T\int_{[0,\,1]}\left(\partial_t\varphi+v(x)\partial_x\varphi\right)\,d\mu_t\llcorner Q(x)\,dt \\
	&= \int_0^T\int_{[0,\,\tau^{-1}(t)]}\Bigl(\partial_t\varphi(\Phi_t(x,\,0),\,t)+v(\Phi_t(x,\,0))\partial_x\varphi(\Phi_t(x,\,0),\,t)\Bigr)\,d\mu_0(x)\,dt \\
	&= \int_0^T\int_{[0,\,\tau^{-1}(t)]}\frac{d}{dt}\varphi(\Phi_t(x,\,0),\,t)\,d\mu_0(x)\,dt,
\intertext{where in the last passage we have used~\eqref{eq:char1}. Switching the order of integration, we continue the calculation as:}
	&= \int_{[0,\,1]}\int_0^{\min\{\tau(x),\,T\}}\frac{d}{dt}\varphi(\Phi_t(x,\,0),\,t)\,dt\,d\mu_0(x) \\
	&= \int_{[0,\,\tau^{-1}(T)]}\int_0^T\frac{d}{dt}\varphi(\Phi_t(x,\,0),\,t)\,dt\,d\mu_0(x) \\
	&\phantom{=} +\int_{(\tau^{-1}(T),\,1]}\int_0^{\tau(x)}\frac{d}{dt}\varphi(\Phi_t(x,\,0),\,t)\,dt\,d\mu_0(x) \\
	&= \int_{[0,\,\tau^{-1}(T)]}\Bigl(\varphi(\Phi_T(x,\,0),\,T)-\varphi(\Phi_0(x,\,0),\,0)\Bigr)\,d\mu_0(x) \\
	&\phantom{=} +\int_{(\tau^{-1}(T),\,1]}\Bigl(\varphi(\Phi_{\tau(x)}(x,\,0),\,\tau(x))-\varphi(\Phi_0(x,\,0),\,0)\Bigr)\,d\mu_0(x) \\
	&= \underbrace{\int_{[0,\,\tau^{-1}(T)]}\varphi(\Phi_T(x,\,0),\,T)\,d\mu_0(x)}_{\textrm{(i)}}
		+\underbrace{\int_{(\tau^{-1}(T),\,1]}\varphi(1,\,\tau(x))\,d\mu_0(x)}_{\textrm{(ii)}} \\
	&\phantom{=}	-\underbrace{\int_{[0,\,1]}\varphi(x,\,0)\,d\mu_0(x)}_{\textrm{(iii)}}.
\end{align*}
At this point, from~\eqref{eq:proof.restrictions.1} we recognise that the term (i) is indeed $\int_{[0,\,1]}\varphi(x,\,T)\,d\mu_T(x)=\dual{\mu_T}{\varphi}$ and that the term (ii) is $\int_{[0,\,T]}\varphi(1,\,t)\,d\nu_1(t)=\dual{\nu_1}{\varphi}$, while the term (iii) is clearly $\dual{\mu_0}{\varphi}$. Consequently~\eqref{P1} follows.

\medskip

We consider now the case $\mu_0=0$ and assume, without loss of generality, that $T\geq\sigma(0)$. Then $\sigma^{-1}(T)\geq 0$ whence, recalling again~\eqref{candidate},~\eqref{outnu}, we find
\begin{equation}
	\mu_T=\int_{[\sigma^{-1}(T),\,T]}\delta_{\Phi_T(0,\,s)}\,d\nu_0(s), \qquad
		\nu_1=\int_{[0,\,\sigma^{-1}(T))}\delta_{\sigma(s)}\,d\nu_0(s)
	\label{eq:proof.restrictions.2}
\end{equation}
and we have to show that
\begin{equation}
	\dual{\mu\llcorner Q}{\partial_t\varphi+v(x)\partial_x\varphi}=\dual{\mu_T}{\varphi}+\dual{\nu_1-\nu_0}{\varphi},
		\qquad \forall\,\varphi\in C^1(\bar{Q}),
	\label{P2}
\end{equation}
where $\mu$ is again the measure~\eqref{solcand1}. Following the characteristics we see that $\mu\llcorner Q$ is now expressed as
$$ \mu\llcorner Q(dx\,dt)=\underbrace{\int_{[\max\{0,\,\sigma^{-1}(t)\},\,t]}\delta_{\Phi_t(0,\,s)}(dx)\,d\nu_0(s)}_{:=\mu_t\llcorner Q(dx)}\otimes\,dt, $$
hence for $\varphi\in C^1(\bar{Q})$ we obtain:
\begin{align*}
	\dual{\mu\llcorner Q}{\partial_t\varphi+v(x)\partial_x\varphi} &=
		\int_0^T\int_{[0,\,1]}\left(\partial_t\varphi+v(x)\partial_x\varphi\right)\,d\mu_t\llcorner Q(x)\,dt \\
	&= \int_0^T\int\limits_{[\max\{0,\,\sigma^{-1}(t)\},\,t]}\hspace{-5mm}\Bigl(\partial_t\varphi(\Phi_t(0,\,s),\,t)+
		v(\Phi_t(0,\,s))\partial_x\varphi(\Phi_t(0,\,s)\,t)\Bigr)\,d\nu_0(s)\,dt \\
	&= \int_0^T\int_{[\max\{0,\,\sigma^{-1}(t)\},\,t]}\frac{d}{dt}\varphi(\Phi_t(0,\,s),\,t)\,d\nu_0(s)\,dt,
\intertext{where in the last passage we have used~\eqref{eq:char2}. We now switch the order of integration to discover:}
	&= \int_{[0,\,T]}\int_s^{\min\{\sigma(s),\,T\}}\frac{d}{dt}\varphi(\Phi_t(0,\,s),\,t)\,dt\,d\nu_0(s) \\
	&= \int_{[0,\,\sigma^{-1}(T)]}\int_s^{\sigma(s)}\frac{d}{dt}\varphi(\Phi_t(0,\,s),\,t)\,dt\,d\nu_0(s) \\
	&\phantom{=} +\int_{(\sigma^{-1}(T),\,T]}\int_s^T\frac{d}{dt}\varphi(\Phi_t(0,\,s),\,t)\,dt\,d\nu_0(s) \\
	&= \int_{[0,\,\sigma^{-1}(T)]}\Bigl(\varphi(\Phi_{\sigma(s)}(0,\,s),\,\sigma(s))-\varphi(\Phi_s(0,\,s),\,s)\Bigr)\,d\nu_0(s) \\
	&\phantom{=} +\int_{(\sigma^{-1}(T),\,T]}\Bigl(\varphi(\Phi_T(0,\,s)\,T)-\varphi(\Phi_s(0,\,s),\,s)\Bigr)\,d\nu_0(s) \\
	&= \underbrace{\int_{[0,\,\sigma^{-1}(T)]}\varphi(1,\,\sigma(s))\,d\nu_0(s)}_{\textrm{(i)}}
		+\underbrace{\int_{(\sigma^{-1}(T),\,T]}\varphi(\Phi_T(0,\,s),\,T)\,d\nu_0(s)}_{\textrm{(ii)}} \\
	&\phantom{=} -\underbrace{\int_{[0,\,T]}\varphi(0,\,s)\,d\nu_0(s)}_{\textrm{(iii)}}.
\end{align*}
Thanks to~\eqref{eq:proof.restrictions.2} we recognise that the term (i) is $\int_{[0,\,T]}\varphi(1,\,t)\,d\nu_1(t)=\dual{\nu_1}{\varphi}$ and that the term (ii) is $\int_{[0,\,1]}\varphi(x,\,T)\,d\mu_T(x)=\dual{\mu_T}{\varphi}$, while the term (iii) is clearly $\dual{\nu_0}{\varphi}$. Hence~\eqref{P2} follows.

\bigskip

To conclude the proof, we show the continuous dependence estimate~\eqref{dependence}. We consider two problems of the type~\eqref{problemsingleQ} with respective initial data $\mu^1_0,\,\mu^2_0$ and source data $\nu^1_0,\,\nu^2_0$.

We begin by estimating the term $\normBL{\mu^2_T-\mu^1_T}$. Let $\varphi\in BL(Q)$ with $\Vert\varphi\Vert_{BL}\leq 1$. Recalling~\eqref{candidate} we have:
\begin{align*}
	\dual{\mu^2_T-\mu^1_T}{\varphi} &= \int_{[0,\,1]}\varphi(x,\,T)\,d(\mu^2_T-\mu^1_T)(x) \\
	&= \int_{[0,\,\max\{0,\,\tau^{-1}(T)\}]}\varphi(\Phi_T(x,\,0),\,T)\,d(\mu^2_0-\mu^1_0)(x) \\
	&\phantom{=} +\int_{[\max\{0,\,\sigma^{-1}(T)\},\,T]}\varphi(\Phi_T(0,\,s),\,T)\,d(\nu^2_0-\nu^1_0)(s) \\
	&\leq \abs{\mu^2_0-\mu^1_0}([0,\,\max\{0,\,\tau^{-1}(T)\}])+
		\abs{\nu^2_0-\nu^1_0}([\max\{0,\,\sigma^{-1}(T)\},\,T])
\intertext{where here $\abs{\cdot}$ stands for the total variation of a measure. Thus}
	&\leq C\left(\normBL{\mu^2_0-\mu^1_0}+\normBL{\nu^2_0-\nu^1_0}\right)
\end{align*}
and consequently, taking the supremum over $\varphi$ at both sides,
$$ \normBL{\mu^2_T-\mu^1_T}\leq C\left(\normBL{\mu^2_0-\mu^1_0}+\normBL{\nu^2_0-\nu^1_0}\right). $$

Proceeding in a similar way for $\normBL{\nu^2_1-\nu^1_1}$, from~\eqref{outnu} we have:
\begin{align*}
	\dual{\nu^2_1-\nu^1_1}{\varphi} &= \int_{[0,\,T]}\varphi(1,\,t)\,d(\nu^2_1-\nu^1_1)(t) \\
	&= \int_{(\max\{0,\,\tau^{-1}(T)\},\,1]}\varphi(1,\,\tau(x))\,d(\mu^2_0-\mu^1_0)(x) \\
	&\phantom{=} +\int_{[0,\,\max\{0,\,\sigma^{-1}(T)\})}\varphi(1,\,\sigma(s))\,d(\nu^2_0-\nu^1_0)(s) \\
	&\leq \abs{\mu^2_0-\mu^1_0}((\max\{0,\,\tau^{-1}(T)\},\,1])+
		\abs{\nu^2_0-\nu^1_0}([0,\,\max\{0,\,\sigma^{-1}(T)\})) \\
	&\leq C\left(\normBL{\mu^2_0-\mu^1_0}+\normBL{\nu^2_0-\nu^1_0}\right),
\end{align*}
hence, taking the supremum over $\varphi$ at both sides,
$$ \normBL{\nu^2_1-\nu^1_1}\leq C\left(\normBL{\mu^2_0-\mu^1_0}+\normBL{\nu^2_0-\nu^1_0}\right). $$

Summing the two estimates just obtained yields finally~\eqref{dependence}.

Moreover, for $\mu^1_0=\mu^2_0$, $\nu^1_0=\nu^2_0$ the estimate~\eqref{dependence} implies $\mu^1_T=\mu^2_T$, $\nu^1_1=\nu^2_1$, hence the uniqueness of~\eqref{candidate} and~\eqref{outnu}.
\end{proof}

\begin{proof}[Proof of Theorem~\ref{theo:time_dep}]
We begin with the estimate of $\normBL{\mu_t-\mu_{t'}}$. Let $\varphi\in BL(Q)$ be such that $\Vert\varphi\Vert_{BL}\leq 1$. By~\eqref{candidate}, since
$$ (-\infty,\,\tau^{-1}(t'))=(-\infty,\,\tau^{-1}(t))\cup [\tau^{-1}(t),\,\tau^{-1}(t')], $$
we can write:
\begin{align*}
	& \int_{(-\infty,\,\tau^{-1}(t))\cap [0,\,1)}\varphi(\Phi_{t}(x,\,0),\,t)\,d\mu_0(x)
		-\int_{(-\infty,\,\tau^{-1}(t'))\cap [0,\,1)}\varphi(\Phi_{t'}(x,\,0),\,t')\,d\mu_0(x) \\
	&= \int_{(-\infty,\,\tau^{-1}(t))\cap [0,\,1]}\Bigl(\varphi(\Phi_{t}(x,\,0),\,t)
		-\varphi(\Phi_{t'}(x,\,0),\,t')\Bigr)\,d\mu_0(x) \\
	&\phantom{=}	-\int_{[\tau^{-1}(t),\,\tau^{-1}(t'))\cap [0,\,1]}\varphi(\Phi_{t'}(x,\,0),\,t')\,d\mu_0(x) \\
	&\leq \mu_{0}((-\infty,\,\tau^{-1}(t))\cap [0,\,1))\norm{v}_\infty\abs{t-t'}
		-\int_{[\tau^{-1}(t),\,\tau^{-1}(t'))\cap [0,\,1]}\varphi(\Phi_{t'}(x,\,0),\,t')\,d\mu_0(x).
\end{align*}

Likewise, assuming for simplicity that $\sigma^{-1}(t)\leq t'$,
\begin{align*}
	& \int_{(\sigma^{-1}(t),\,t]\cap (0,\,T]}\varphi(\Phi_{t}(0,\,s),\,t)\,d\nu_0(s)
		-\int_{(\sigma^{-1}(t'),\,t']\cap (0,\,T]}\varphi(\Phi_{t}(0,\,s),\,t)\,d\nu_0(s) \\
	&= -\int_{(\sigma^{-1}(t'),\,\sigma^{-1}(t)]}\varphi(\Phi_{t'}(0,\,s),\,t')\,d\nu_0(s) \\
	&\phantom{=} +\int_{(\sigma^{-1}(t),\,t']}\Bigl(\varphi(\Phi_{t}(0,\,s),\,t)-\varphi(\Phi_{t'}(0,\,s),\,t')\Bigr)\,d\nu_0(s)
		+\int_{(t',\,t]}\varphi(\Phi_{t}(0,\,s),\,t)\,d\nu_0(s) \\
	&\leq \nu_0((t',\,t])+\nu_0((t-\tau(0),\,t'])\norm{v}_\infty\abs{t-t'}
		-\int_{(\sigma^{-1}(t'),\,\sigma^{-1}(t)]}\varphi(\Phi_{t'}(0,\,s),\,t')\,d\nu_0(s).
\end{align*}
Hence
\begin{align*}
	\abs{\dual{\mu_t-\mu_{t'}}{\varphi}} &\leq
		\abs{\int_{(\tau^{-1}(t),\,\tau^{-1}(t')]\cap [0,\,1]}\Bigl(\varphi(\Phi_{t'}(x,\,0),\,t')
			-\varphi(1,\,\tau(x))\Bigr)\,d\mu_0(x)} \\
	&\phantom{\leq} +\abs{\int_{(\sigma^{-1}(t'),\,\sigma^{-1}(t)]}\Bigl(\varphi(1,\,\sigma(s))-\varphi(\Phi_{t'}(0,\,s),\,t')\Bigr)\,d\nu_0(s)} \\
	&\leq \mu_0((\tau^{-1}(t),\,\tau^{-1}(t')]\cap [0,\,1])\norm{v}_\infty\abs{t-t'} \\
	&\phantom{\leq} +\nu_0((\sigma^{-1}(t'),\,\sigma^{-1}(t)])\norm{v}_\infty\abs{t-t'} \\
	&\leq \norm{v}_\infty\Bigl(\mu_{0}([0,\,1])+\nu_0([0,\,t])\Bigr)\abs{t-t'}+\nu_0((t',\,t]) \\
	&\leq C\abs{t-t'}+\nu_0([t',\,t])
\end{align*}
and finally, taking the supremum over $\varphi$ at both sides,
$$ \normBL{\mu_t-\mu_{t'}}\leq C\abs{t-t'}+\nu_0([t',\,t]). $$

We now consider the estimate on the outflow measures. Taking again $\varphi\in BL(Q)$ with $\norm{\varphi}_{BL}\leq 1$, we compute:
\begin{align*}
	\dual{\nu_1\llcorner [0,\,t]-\nu_1\llcorner [0,\,t']}{\varphi} &= \int_{[0,\,1)\cap[\tau^{-1}(t),\,1)}\varphi(1,\,\tau(x))\,d\mu_0(x)
		+\int_{(0,\,t]\cap (0,\,\sigma^{-1}(t)]}\varphi(1,\,\sigma(s))\,d\nu_0(s) \\
	&\phantom{=} -\int_{[0,\,1)\cap [\tau^{-1}(t'),\,1)}\varphi(1,\,\tau(x))\,d\mu_0(x)
		-\int_{(0,\,t']\cap (0,\,\sigma^{-1}(t')]}\hspace{-3mm}\varphi(1,\,\sigma(s))\,d\nu_0(s).
\end{align*}
We point out that if $\sigma^{-1}(t)<0$ then the interval $(0,\,\sigma^{-1}(t)]$ is actually understood as $[\sigma^{-1}(t),\,0)$ and, in this case, $(0,\,t]\cap (0,\,\sigma^{-1}(t)]=\emptyset$. Moreover, since $t>t'$ we have $\tau^{-1}(t')>\tau^{-1}(t)$, which implies $[\tau^{-1}(t'),\,1)=[\tau^{-1}(t'),\,\tau^{-1}(t))\cup [\tau^{-1}(t),\,1)$. Then
\begin{multline*}
	\int_{[0,\,1)\cap [\tau^{-1}(t),\,1)}\varphi(1,\,\tau(x))\,d\mu_{0}(x)
		-\int_{[0,\,1)\cap [\tau^{-1}(t'),\,1)}\varphi(1,\,\tau(x))\,d\mu_{0}(x) \\
	=\int_{[0,\,1)\cap [\tau^{-1}(t),\,\tau^{-1}(t'))}\varphi(1,\,\tau(x))\,d\mu_0(x).
\end{multline*}
Moreover,
\begin{multline*}
	\int_{(0,\,\sigma^{-1}(t)]\cap (0,\,t]}\varphi(1,\,\sigma(s))\,d\nu_0(s) \\
	=\int_{(0,\,\sigma^{-1}(t')]\cap (0,\,t]}\varphi(1,\,\sigma(s))\,d\nu_0(s)
		+\int_{(\sigma^{-1}(t'),\,\sigma^{-1}(t)]\cap (0,\,t]}\varphi(1,\,\sigma(s))\,d\nu_0(s),
\end{multline*}
which gives
\begin{multline*}
	\int_{(0,\,\sigma^{-1}(t)]\cap (0,\,t]}\varphi(1,\,\sigma(s))\,d\nu_0(s)
		-\int_{(0,\,\sigma^{-1}(t')]\cap (0,\,t']}\varphi(1,\,\sigma(s))\,d\nu_0(s) \\
	=\int_{(\sigma^{-1}(t'),\,\sigma^{-1}(t)]\cap (0,\,t]}\varphi(1,\,\sigma(s))\,d\nu_0(s).
\end{multline*}
Therefore
\begin{align*}
	\dual{\nu_1\llcorner [0,\,t]-\nu_1\llcorner [0,\,t']}{\varphi} &= \int_{[0,\,1)\cap [\tau^{-1}(t),\,\tau^{-1}(t'))}\varphi(1,\,\tau(x))\,d\mu_0(x) \\
	&\phantom{=} +\int_{(\sigma^{-1}(t'),\,\sigma^{-1}(t)]\cap (0,\,t]}\varphi(1,\,\sigma(s))\,d\nu_0(s) \\
	&\leq \nu_0((t',\,t])+\nu_0((\sigma^{-1}(t),\,t'])\norm{v}_\infty\abs{t-t'} \\
	&\phantom{\leq}	+\mu_0((-\infty,\,\tau^{-1}(t))\cap [0,\,1))\norm{v}_\infty\abs{t-t'},
\end{align*}
whence, taking the supremum over $\varphi$ at both sides,
$$ \normBL{\nu_1\llcorner [0,\,t]-\nu_1\llcorner [0,\,t']}\leq C\abs{t-t'}+\nu_0([t',\,t]). $$

Summing the estimates obtained so far for the terms $\normBL{\mu_t-\mu_{t'}}$, $\normBL{\nu_1\llcorner [0,\,t]-\nu_1\llcorner [0,\,t']}$ we finally get~\eqref{notcont}.
\end{proof}

\section*{Acknowledgements}
A.T. is member of GNFM (Gruppo Nazionale per la Fisica Matematica) of INdAM (Istituto Nazionale di Alta Matematica), Italy.

A.T. acknowledges that this work has been written within the activities of a research project funded by ``Compagnia di San Paolo'' (Turin, Italy).

\bibliographystyle{amsplain}
\bibliography{references}
\end{document}